\DeclarePairedDelimiter{\ceil}{\lceil}{\rceil}
\def\RR{{\mathbb R}}
\def\ZZ{{\mathbb Z}}
\def\dist{{\rm dist}}
\def\C{{\mathcal C}}
\def\NN{{\mathbb N}}
\def\MM{{\mathbb M}}
\def\Vect{{\rm span}}
\def\Rect{({\mathsf{Rec}})}
\def\Trian{({\mathsf{Tri}})}
\def\Err{\mathcal{E}_h}
\def\Errm{\mathscr{M}_h}
\def\Rk{{\rm rk}}
\def\T{{\mathcal T}}
\def\A{{\mathscr N}}
\def\c{{\Lambda}}
\def\f{f}
\newlength{\AccoHaut}
\newtheorem{lemma}{Lemma}[section]%
\newtheorem{theorem}{Theorem}[section]%
\newtheorem{corollary}{Corollary}[section]%
\newtheorem{remark}{Remark}[section]%
\keywords{Laminations, gradient inclusions, error estimate, finite element approximation}
\begin{document}
\title[On the error estimate of gradient inclusions]{On the error estimate of gradient inclusions}
\author[Omar Anza Hafsa]{Omar Anza Hafsa}

\address{Laboratoire LMGC, UMR-CNRS 5508, Place Eug\`ene Bataillon, 34095 Montpellier, France.}
\address{Universit\'e de N\^\i mes, Laboratoire MIPA, Site des Carmes, Place Gabriel P\'eri, 30021 N\^\i mes, France}

\email{omar.anza-hafsa@univ-montp2.fr}

\begin{abstract} 
The numerical analysis of gradient inclusions in a compact subset of $2\times 2$ diagonal matrices is studied. Assuming that the boundary conditions are reached after a finite number of laminations and using piecewise linear finite elements, we give a general error estimate in terms of the number of laminations and the mesh size. This is achieved by reduction results from compact to finite case. 
\end{abstract}
\maketitle

\section{Introduction and main result}\label{intro}
We denote by $\{\T_h\subset\RR^2:h>0\}$ a family of regular triangulation and by $\Omega\subset\RR^2$ a bounded polygonal domain satisfying 
\[
\overline{\Omega}=\bigcup_{T\in \T_h}T
\]
where $h$ is the mesh size defined by $h:=\max\{h_T:T\in\tau_h\}$ with $h_T$ is the diameter of a triangle $T\in\T_h$. For each $h>0$, we set the finite element space
\[
V_0^h:=\left\{u\in\C(\overline{\Omega};\RR^2): u\lfloor_{T}\mbox{ is affine for all }T\in\T_h\mbox{ and } u=0 \mbox{ on }\partial\Omega\right\}
\]
where $\partial\Omega$ is the boundary of $\Omega$. Let $K\subset \MM^{2\times 2}$ be a compact set. The error analysis of gradient inclusions consists to provide an optimal estimation in terms of the mesh size $h$ of 
\begin{align*}
\Errm^K:=\inf\left\{\Err^K(u):u\in V_0^h\right\}\quad\mbox{ with }\quad V_0^h\ni u\mapsto\Err^K(u):=\vert\{x\in\Omega:\nabla u(x)\notin K \}\vert,
\end{align*}
where $\vert \cdot\vert$ denotes the Lebesgue measure on $\RR^2$. 

We can assume that $0\notin K$, otherwise take $u=0$ then $\Err^K(u)=\Errm^K=0$. The construction of such $u\in V_0^h$ has to take account of the lack of rank-one compatibility of matrices of $K$. Two matrices $A,B\in\MM^{2\times  2}$ are said to be rank-one compatible if and only if $\mbox{\rm rk}(A-B)\le 1$.  This concept is related to the Hadamard lemma (see for instance \cite{Chipot2}) which states roughly that the gradients of a continuous affine function in two contiguous regions $\Omega_1$ and $\Omega_2$ of $\Omega$, i.e. $\nabla u=A$ on $\Omega_1$ and $\nabla u=B$ on $\Omega_2$, have to satisfy $\mbox{\rm rk}(A-B)\le 1$. 

Error analysis in terms of the mesh size $h$ has been accomplished in \cite{Chipot1, oah-chipot05} (see also \cite{bartels-prohl, bo-li}) for particular situations where $K$ was composed of four elements of diagonal $2\times 2$ matrices non rank-one compatible. In these situations, the boundary data $0$ was reached after $L$ laminations with $L=1,2$ in \cite{Chipot1} and $L=3,4$ in \cite{oah-chipot05}. This can be formulated by writing $0\in K^{(L)}$ where $L$ is the {\em level of laminations} defined as the integer $L=\inf\left\{i\in\NN: 0\in K^{(i)}\right\}$ (with the convention $\inf\emptyset=+\infty$), where for each $i\in\NN^\ast$
\[
\begin{cases}
K^{(0)}=K\\
K^{(i)}=\left\{\lambda F+(1-\lambda)G: F,G\in K^{(i-1)},\lambda\in [0,1],\mbox{ and
}\Rk(F-G)\le 1\right\}.
\end{cases}
\]
The set $K^{(i)}$ is called the {\em $i$-th lamination convex hull}. Our goal is to generalize these particular situations by assuming that {$L$ is finite and $K$ is a compact subset of the set of $2\times 2$ diagonal matrices $\MM^{2\times 2}_{\rm d}$}. For any $C>0$ we set
\[
V_{0,C}^h:=\left\{u\in V_0^h:\Vert\nabla u\Vert_{L^\infty(\Omega;\RR^2)}\le C\right\}.
\]
\begin{theorem}\label{main result} Assume that $1\le L<+\infty$, and $0\in K^{(L)}$ with $K\subset \MM^{2\times 2}_{\rm d}$ a compact set. Then there exists a finite set $\Sigma\subset K$ such that 
\[
\mbox{\rm card}(\Sigma)\le 2^{L+1} \quad \mbox{ and }\quad 0\in \Sigma^{(L)}, 
\] 
and there exist $C_L>0$ and $h_L>0$ such that for every $h\in ]0,h_L[$ there exists $\widehat{u}\in V_{0,C_L}^h$
\[
\Errm^K\le \Err^\Sigma(\widehat{u})\le C_Lh^{1\over 1+L}.
\]
\end{theorem}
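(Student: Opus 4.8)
The plan is first to \emph{reduce the compact set $K$ to a finite configuration} $\Sigma$, and then to treat the finite case by an explicit multiscale finite-element construction; the second part is the substance of the argument.

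\emph{Reduction to a finite $\Sigma$.} Since $0\in K^{(L)}$, unfolding the definition of the iterated lamination hull produces a witnessing binary tree of depth $L$: its root carries $0$; a node carrying $M\in K^{(i)}$ has two children carrying $F,G\in K^{(i-1)}$ with $\Rk(F-G)\le 1$ and $M=\lambda F+(1-\lambda)G$ for some $\lambda\in[0,1]$ (and one terminates a branch as soon as its label already lies in $K$). After $L$ generations every leaf belongs to $K^{(0)}=K$, and there are at most $2^L\le 2^{L+1}$ leaves; let $\Sigma\subset K$ be the set of leaves. A downward induction over the tree — a node at depth $L-k$ lies in $\Sigma^{(k)}$, since its two children lie in $\Sigma^{(k-1)}$ and it is a rank-one convex combination of them — yields $0\in\Sigma^{(L)}$. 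From now on we work with $K=\Sigma$ together with a fixed such tree, whose internal data are the rank-one directions $n_1,\dots$ and the coefficients $\lambda_1,\dots$; all constants below (written $C_L$) may depend on $L$, on $\Omega$, on $\max_{A\in\Sigma}|A|$, and on the regularity constant of the family $\{\T_h\}$, but not on $h$.

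\emph{The multiscale laminate.} Using $\Sigma\subset\MM^{2\times 2}_{\rm d}$: under $\mathrm{diag}(a,b)\leftrightarrow(a,b)$, two diagonal matrices are rank-one compatible exactly when they share a coordinate, so every rank-one segment in the tree is parallel to a coordinate axis and each $n_j$ is $e_1$ or $e_2$. Fix scales $h\ll\eps_L\ll\cdots\ll\eps_1\ll 1$, to be chosen at the end, and build a continuous map $w\colon\overline\Omega\to\RR^2$, affine on the cells of a polygonal partition, by recursion along the tree carried out simultaneously in all regions. At generation $1$, cut $\Omega$ by the lines $\{n_1\cdot x\in\eps_1\ZZ\}$ into stripes of thickness $\eps_1$; the stripes allotted to the first child $F_1$ (resp.\ $G_1$) will carry the $F_1$-subtree (resp.\ $G_1$-subtree), and a period-$\eps_1$ sawtooth profile of amplitude $\le C\eps_1$ realises there the two-gradient oscillation with mean equal to the parent label. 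Inductively, inside a generation-$(j-1)$ stripe whose target gradient is the node label $P\in\Sigma^{(L-j+1)}$, we repeat the construction at scale $\eps_j$ in the direction $n_j$, superposing a period-$\eps_j$ sawtooth of amplitude $\le C\eps_j$ realising the oscillation between the two children of $P$ with mean $P$; to keep $w$ continuous across the coarser interfaces we damp this generation-$j$ oscillation to $0$ in a collar of width $\sim\eps_j$ about the boundary of the ambient generation-$(j-1)$ stripe, where the gradient then takes an auxiliary (in general not in $\Sigma$) but $O(1)$ value. Finally, since the generation-$1$ profile has amplitude $\sim\eps_1$ near $\partial\Omega$, we set $w\equiv 0$ on $\{\dist(\cdot,\partial\Omega)<\eps_1\}$ and interpolate in between.

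\emph{Mesh interpolation, error count, and conclusion.} Put $\widehat u:=I_hw$, the $\T_h$-nodal interpolant. Because $w\equiv 0$ near $\partial\Omega$ one has $\widehat u\in V_0^h$, and because each gradient value of $w$ is either a leaf of $\Sigma$ or of the form (amplitude $\lesssim\eps_j$)/(width $\sim\eps_j$), hence $O(1)$, shape-regularity of $\T_h$ gives $\|\nabla\widehat u\|_{L^\infty(\Omega;\RR^2)}\le C_L$, i.e.\ $\widehat u\in V_{0,C_L}^h$. On any triangle contained in a partition cell where $w$ is affine with gradient in $\Sigma$ we have $\nabla\widehat u\in\Sigma$; hence $\{\nabla\widehat u\notin\Sigma\}$ lies in the boundary layer $\{\dist(\cdot,\partial\Omega)<\eps_1\}$, the union of all damping collars, and the union of the triangles meeting the sawtooth kink-lines. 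Since the generation-$j$ kink-lines have total length $\lesssim\eps_j^{-1}$ and are thickened by $\lesssim h$, the generation-$j$ collars surround interfaces of total length $\lesssim\eps_{j-1}^{-1}$ and have width $\lesssim\eps_j$, and each triangle has area $\le Ch^2$, we get (up to $C_L$)
\[
\Err^\Sigma(\widehat u)\ \le\ \eps_1\ +\ \sum_{j=1}^{L}\frac{h}{\eps_j}\ +\ \sum_{j=2}^{L}\frac{\eps_j}{\eps_{j-1}}.
\]
Choosing $\eps_j:=h^{j/(1+L)}$ — for which $h\ll\eps_L\ll\cdots\ll\eps_1\ll 1$ whenever $h<h_L$ — makes every term at most $h^{1/(1+L)}$, so $\Err^\Sigma(\widehat u)\le C_Lh^{1/(1+L)}$; and $\Errm^K\le\Err^\Sigma(\widehat u)$ because $\widehat u\in V_0^h$ and $\Sigma\subset K$ force $\{\nabla\widehat u\notin K\}\subset\{\nabla\widehat u\notin\Sigma\}$. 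This yields the theorem.

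\emph{Main obstacle.} The delicate step is the construction of the laminate together with its mesh version: one must realise the recursive lamination on a triangulation in no way adapted to it, while preserving global continuity. The binding constraint is that a generation-$j$ oscillation has amplitude $\eps_j\gg h$, so it cannot be glued across a coarser interface within a mesh-sized transition without producing gradients of order $\eps_j/h\to\infty$; this forces the width-$\eps_j$ damping collars, and the competition between the collar contributions $\sum_j\eps_j/\eps_{j-1}$, the kink-line contributions $\sum_j h/\eps_j$, and the outer boundary layer $\eps_1$ is exactly what pins the exponent $1/(1+L)$. Carrying out this geometry and the attendant combinatorial bookkeeping uniformly in $h$ and in the mesh is the heart of the proof.
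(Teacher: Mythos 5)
Your proposal follows the same blueprint as the paper — reduce the compact $K$ to a finite $\Sigma$ by unwinding the lamination hull, build a multiscale laminate with damping collars at each generation, interpolate on $\T_h$, and balance boundary, kink-line, and collar errors — and your error decomposition $\eps_1+\sum_j h/\eps_j+\sum_j\eps_j/\eps_{j-1}$ is exactly the paper's estimate \eqref{estim_totale1} under the dictionary $\eps_j=h^\alpha/\prod_{l}k_{j-2l}$, so the final bound and exponent agree. The main difference is presentational: you posit the geometric sequence $\eps_j=h^{j/(1+L)}$ directly and balance all terms at once, whereas the paper carries a parameter $\alpha$ and integers $k_2,\dots,k_L$ and minimizes by a backwards induction (Lemma~\ref{induction1}, then $\sigma_{L-2},\sigma_{L-1}$); your route is cleaner but sweeps under the rug the integrality of $\eps_{j-1}/\eps_j$, which the paper handles by taking $\widehat{k}_j=\lceil x_j\rceil+1$ and restricting $h$ so that $\widehat{k}_j\in\A_j$. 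You also omit two preparatory points the paper makes explicit: first, the alternation of successive rank-one directions (Lemma~\ref{prop0}), which the paper secures by always taking the \emph{greatest} rank-one interval (Remark~\ref{greatest-interval-remark}) and which underlies your tacit assumption that the generation-$j$ collar sits around generation-$(j-1)$ interfaces of total length $\sim1/\eps_{j-1}$; and second, the $\Rect$/$\Trian$ dichotomy (Lemma~\ref{choice}/Theorem~\ref{choicecorollary}), which the paper uses to set up a concrete periodic cell ($v^{(1)}$ on $[0,2h^\alpha]^2$) before the recursion. Your binary-tree reduction is in fact a little leaner (it gives $\mathrm{card}(\Sigma)\le 2^L$ directly), and the omitted alternation can be supplied by the same greatest-interval argument; neither omission damages the estimate, but a complete write-up would need them.
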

Although we can find a similar result in \cite{bo-li}, their estimate is obtained (in a different framework) with respect to a particular finite element decomposition of $\Omega$, more precisely the triangulation is chosen depending on $K$. In our work, we obtain estimate of the same order $\mathcal{O}(h^{1\over 1+L})$ but {\em it is valid for any regular family of triangulation and for any compact subset of $\MM^{2\times 2}_{\rm d}$}. 
The following result is an easy consequence of Theorem\ref{main result}.                                                                                        
\begin{corollary}\label{main corollary} Let $\f:\MM^{2\times 2}\to [0,+\infty[$ be a Borel measurable function bounded on bounded sets such that 
$
K:=\left\{\xi\in\MM^{2\times 2}: \f(\xi)=0\right\}\subset\MM^{2\times 2}_{\rm d}\mbox{ is compact }.
$
Assume that $1\le L<+\infty$ and $0\in K^{(L)}$. Then there exist $C_L>0$ and $h_L>0$ such that for every $h\in ]0,h_L[$ we have
\[
\inf\left\{\int_\Omega \f(\nabla u(x))dx: u\in V_0^h\right\}\le C_Lh^{1\over 1+L}.
\]  
\end{corollary}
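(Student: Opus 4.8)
The plan is to apply Theorem~\ref{main result} directly and to turn its measure estimate into an $L^1$-estimate using only the hypothesis that $\f$ is bounded on bounded sets. First I would invoke Theorem~\ref{main result} with this very $K$, which by hypothesis is a compact subset of $\MM^{2\times 2}_{\rm d}$ with $0\in K^{(L)}$ and $1\le L<+\infty$: this yields the finite set $\Sigma\subset K$ with $\mbox{\rm card}(\Sigma)\le 2^{L+1}$ and $0\in\Sigma^{(L)}$, the constants $C_L,h_L>0$, and for each $h\in\,]0,h_L[$ a function $\widehat u\in V_{0,C_L}^h$ such that $\Err^\Sigma(\widehat u)\le C_Lh^{1/(1+L)}$. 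Since $\widehat u\in V_0^h$, it is admissible in the infimum on the left-hand side, so it suffices to bound $\int_\Omega\f(\nabla\widehat u(x))\,dx$.

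Next I would split $\Omega$ (up to the Lebesgue-null set formed by the edges of $\T_h$, where $\nabla\widehat u$ is not defined) into the two measurable pieces $\{x\in\Omega:\nabla\widehat u(x)\in\Sigma\}$ and $\{x\in\Omega:\nabla\widehat u(x)\notin\Sigma\}$; note that $\f\circ\nabla\widehat u$ is measurable because $\nabla\widehat u$ is a simple function and $\f$ is Borel. On the first piece, $\nabla\widehat u(x)\in\Sigma\subset K$, hence $\f(\nabla\widehat u(x))=0$ by the very definition of $K$, so this part contributes nothing. On the second piece I would use that $\widehat u\in V_{0,C_L}^h$ forces $\Vert\nabla\widehat u\Vert_{L^\infty(\Omega)}\le C_L$; since $\f$ is bounded on bounded sets, $M_L:=\sup\{\f(\xi):\xi\in\MM^{2\times 2},\ |\xi|\le C_L\}<+\infty$, whence $\int_{\{\nabla\widehat u\notin\Sigma\}}\f(\nabla\widehat u)\,dx\le M_L\,\vert\{x\in\Omega:\nabla\widehat u(x)\notin\Sigma\}\vert=M_L\,\Err^\Sigma(\widehat u)$.

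Combining the two pieces gives $\int_\Omega\f(\nabla\widehat u(x))\,dx\le M_L\,\Err^\Sigma(\widehat u)\le M_LC_L\,h^{1/(1+L)}$, and taking the infimum over $V_0^h$ on the left-hand side finishes the argument after renaming the constant $M_LC_L$ as $C_L$ (it still depends only on $L$, $K$, $\f$ and the triangulation family). I do not expect any genuine obstacle here: the only place where a hypothesis is really used — and the reason Theorem~\ref{main result} is formulated with the uniform gradient bound $V_{0,C_L}^h$ rather than merely $V_0^h$ — is that $\f$ is bounded on bounded sets, which is exactly what converts the estimate $\Err^\Sigma(\widehat u)=\mathcal{O}(h^{1/(1+L)})$ on the measure of the ``bad set'' into the desired $\mathcal{O}(h^{1/(1+L)})$ bound on $\int_\Omega\f(\nabla\widehat u)$.
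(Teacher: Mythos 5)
Your proof is correct and is exactly the argument the paper has in mind — the paper itself gives no proof of Corollary~\ref{main corollary}, merely noting it is ``an easy consequence of Theorem~\ref{main result}''. You correctly identify the two ingredients the corollary hinges on: $\Sigma\subset K$ makes $\f$ vanish on the good set, and the uniform gradient bound in $V_{0,C_L}^h$ combined with $\f$ being bounded on bounded sets controls the integrand on the bad set, whose measure is already estimated by the theorem.
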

\subsection*{Outline of the paper} Section 2 is devoted to some preliminaries concerning geometry of diagonal matrices and approximations lemma. Section 3 is concerned with the proof of Theorem\ref{main result} which is divided in two parts. In the first part, we show how to deal with one and two laminations, this allows us to highlight the role of geometry of rank one compatibility of matrices of $K$. In the second part, we consider the general case of $L$ laminations whose analysis can be carried out by using reduction results from compact to finite case (see Proposition~\ref{choice} and Proposition~\ref{prop0}).
\section{Preliminaries}
\subsection{Rank-one compatibility in ${\MM^{2\times 2}_{\rm d}}$} 
In the following of the paper, it will be useful to consider the identification
\[
\MM^{2\times 2}_{\rm d}\ni\begin{pmatrix}x_{11}&0\\ \\0&x_{22}\end{pmatrix}\mapsto \begin{pmatrix}x_{11}\\ \\ x_{22}\end{pmatrix}\in\RR^2.
\] 

We state some facts about rank-one compatibility. Consider $\{A,B\}\in \MM^{2\times 2}_{\rm d}$ satisfying $\Rk(A-B)\le
1$, then $A-B=\alpha E_l$ for some $\alpha\in\RR$ and some
$l\in\{1,2\}$, with 
\[
E_1=\begin{pmatrix}1&0\\ \\0&0\end{pmatrix}\quad\mbox{ and }\quad E_2=\begin{pmatrix}0&0\\ \\0&1\end{pmatrix}.
\] 
Another formulation is to write $A\in B+\Vect(E_l)=A+\Vect(E_l)$ where $\Vect(E_l):=\{\alpha E_l:\alpha\in\RR\}$ is the one-dimensional vector space spanned by $E_l$. Thus, if we denote the interval $[A,B]:=\{\lambda A+(1-\lambda)B: \lambda\in [0,1]\}$ then $[A,B]\subset A+\Vect(E_l)$. 

\begin{lemma}\label{greatest-interval} Let $A\in \MM^{2\times 2}_{\rm d}$. Let $S\subset \MM^{2\times 2}_{\rm d}$ be a compact set. Let $l\in \{1,2\}$ be such that $A+\Vect(E_l)\cap S\not=\emptyset$. Then there exist $F,G\in A+\Vect(E_l)\cap S$ such that 
\begin{align}\label{greatest-interval-eq}
A+\Vect(E_l)\cap S\subset [F,G].
\end{align}
\end{lemma}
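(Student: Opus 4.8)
The plan is to exploit the fact that $(A+\Vect(E_l))\cap S$ is a compact subset of the one-dimensional affine line $A+\Vect(E_l)$, so that it possesses a ``leftmost'' and a ``rightmost'' point along that line, and these two points will serve as $F$ and $G$.

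First I would parametrize the line by the affine injection $\RR\ni t\mapsto A+tE_l\in\MM^{2\times 2}_{\rm d}$ and introduce
\[
P:=\left\{t\in\RR: A+tE_l\in S\right\}.
\]
By the hypothesis $(A+\Vect(E_l))\cap S\neq\emptyset$ we have $P\neq\emptyset$. Since $S$ is closed and $t\mapsto A+tE_l$ is continuous, $P$ is closed; since $S$ is bounded and $\|A+tE_l\|\to+\infty$ as $|t|\to+\infty$ (because $E_l\neq 0$), $P$ is also bounded. Hence $P$ is a nonempty compact subset of $\RR$, so $t_-:=\min P$ and $t_+:=\max P$ are well defined. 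I then set $F:=A+t_-E_l$ and $G:=A+t_+E_l$; by construction $F,G\in(A+\Vect(E_l))\cap S$, so both belong to $A+\Vect(E_l)\cap S$ as required.

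It remains to check the inclusion \eqref{greatest-interval-eq}. Take any $M\in(A+\Vect(E_l))\cap S$ and write $M=A+tE_l$ with $t\in P$, so that $t_-\le t\le t_+$. If $t_-=t_+$ then $F=G=M$ and there is nothing to prove; otherwise put $\lambda:=\frac{t_+-t}{t_+-t_-}\in[0,1]$, and then
\[
\lambda F+(1-\lambda)G=A+\big(\lambda t_-+(1-\lambda)t_+\big)E_l=A+tE_l=M,
\]
so $M\in[F,G]$. This yields $(A+\Vect(E_l))\cap S\subset[F,G]$.

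As for obstacles, there is essentially no hard step here: the statement follows directly from the compactness of $S$ together with the one-dimensionality of the slice $(A+\Vect(E_l))\cap S$. The only point requiring a little care is the degenerate situation in which this slice reduces to a single point, which the argument above absorbs by permitting $F=G$.
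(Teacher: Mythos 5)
Your proof is correct and follows essentially the same route as the paper: parametrize the line $A+\Vect(E_l)$ by $\RR$, use compactness of the slice $(A+\Vect(E_l))\cap S$ to extract extremal parameter values $t_-, t_+$, and take $F,G$ to be the corresponding points. You simply spell out in more detail the compactness of the parameter set and the final convex-combination verification that the paper leaves implicit.
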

\begin{proof} The line $A+\Vect(E_l)$ is naturally isomorphic to $\RR$ with the isomorphism $\eta:A+\Vect(E_l)\to \RR$ is defined by $\eta(A+xE_l)=x$ with $x\in\RR$. We have that $ A+\Vect(E_l)\cap S$ is a compact subset of $A+\Vect(E_l)$. It follows that $\eta\left(A+\Vect(E_l)\cap S\right)$ is a compact subset of $\RR$, so there are an upper and a lower bounds $f,g\in \eta\left(A+\Vect(E_l)\cap S\right)$. Take $F:=\eta^{-1}\left(f\right)$ and $G:=\eta^{-1}\left(g\right)$, then \eqref{greatest-interval-eq} holds. 
\end{proof}

\begin{remark}\label{greatest-interval-remark} The Lemma~\ref{greatest-interval} will be used as follows: if $\widetilde{F}, \widetilde{G}\in S$ and satisfy $Rk(\widetilde{F}-\widetilde{G})=1$ then we can consider the greatest interval $[F,G]$ containing $[\widetilde{F},\widetilde{G}]$ with $\{F,G\}\subset S$. Indeed, since $\widetilde{F}, \widetilde{G}\in \widetilde{F}+\Vect(E_l)\cap S$ for some $l\in\{1,2\}$, we can apply Lemma~\ref{greatest-interval} to find $F,G\in \widetilde{F}+\Vect(E_l)\cap S$ such that $\widetilde{F}+\Vect(E_l)\cap S\subset [F,G]$. Thus $[\widetilde{F},\widetilde{G}]\subset [F,G]$.

\end{remark}
\subsection{Observations on the $i$-th lamination convex hull} The following result is used in Proposition~\ref{choice}. The proof follows easily by induction.
\begin{lemma}\label{compactness} For every $i\in\NN$, the set $K^{(i)}$ is compact.
\end{lemma}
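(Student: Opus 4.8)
The plan is to argue by induction on $i$. The base case $i=0$ is immediate, since $K^{(0)}=K$ is compact by hypothesis. For the inductive step I assume that $K^{(i-1)}$ is compact and show that $K^{(i)}$ is compact, working in the finite-dimensional space $\MM^{2\times 2}$, where compactness is equivalent to being closed and bounded.

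Boundedness is the easy half: by definition every $X\in K^{(i)}$ is a convex combination $\lambda F+(1-\lambda)G$ of two points $F,G\in K^{(i-1)}$, so $K^{(i)}$ is contained in the convex hull of $K^{(i-1)}$, which is bounded because $K^{(i-1)}$ is bounded.

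The substantive part is closedness. I would take a sequence $(X_n)_n\subset K^{(i)}$ with $X_n\to X$ in $\MM^{2\times 2}$ and write $X_n=\lambda_n F_n+(1-\lambda_n)G_n$ with $F_n,G_n\in K^{(i-1)}$, $\lambda_n\in[0,1]$ and $\Rk(F_n-G_n)\le 1$. By compactness of $K^{(i-1)}\times K^{(i-1)}\times[0,1]$, I pass to a subsequence along which $F_n\to F\in K^{(i-1)}$, $G_n\to G\in K^{(i-1)}$ and $\lambda_n\to\lambda\in[0,1]$, whence $X=\lambda F+(1-\lambda)G$. It remains to verify that the rank-one constraint passes to the limit. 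Here I would use that in $\MM^{2\times 2}$ the condition $\Rk(M)\le 1$ is equivalent to $\det M=0$; since the determinant is continuous, $\det(F_n-G_n)=0$ for all $n$ forces $\det(F-G)=0$, i.e.\ $\Rk(F-G)\le 1$. (In the diagonal setting one may alternatively note directly that each $F_n-G_n$ lies in one of the two one-dimensional spaces $\Vect(E_1)$, $\Vect(E_2)$, infinitely many in the same one, and this space being closed contains the limit $F-G$.) Therefore $X\in K^{(i)}$, so $K^{(i)}$ is closed; combined with boundedness, $K^{(i)}$ is compact, completing the induction.

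I do not expect a genuine obstacle: the only point that needs a moment of care is precisely the passage to the limit of the rank-one condition, which is why I would route the argument through the vanishing of the determinant (or through the closedness of the lines $\Vect(E_l)$) rather than manipulating ranks directly.
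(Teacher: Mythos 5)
Your proof is correct and follows the same induction-on-$i$ strategy that the paper invokes (the paper simply states ``the proof follows easily by induction'' without giving details). Your treatment of the rank condition in the limit via continuity of the determinant on $\MM^{2\times 2}$ is exactly the right way to close the argument.
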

The following lemma shows that the level of lamination $L$ can be as large as we wish (see Fig.~\ref{fig0}).
\begin{lemma} For every $n\in\NN^{*}$ there exists a finite set $K\subset\MM^{2\times 2}_{\rm d}$ such that
\[
{\rm card}(K)=n+1\;\mbox{ and }\;0\in K^{(n)}\setminus K^{(n-1)}.
\]
\end{lemma}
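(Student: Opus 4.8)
The plan is to construct $K$ explicitly by arranging $n+1$ diagonal matrices along a "staircase" so that each new lamination step adds exactly one level, and $0$ enters the hull only at level $n$. Thinking in the $\RR^2$-identification, rank-one compatibility in $\MM^{2\times 2}_{\rm d}$ means the two points differ in only one coordinate, i.e. the segment joining them is horizontal or vertical. So I want $n+1$ points $P_0,P_1,\dots,P_n$ in $\RR^2$ such that consecutive points $P_{j},P_{j+1}$ are connected by an axis-parallel segment (alternating horizontal/vertical), the origin lies at one end of this polygonal path, and no point of the path is reachable "too early". A natural choice: let $P_j$ have coordinates forcing the path to spiral or zig-zag, for instance $P_0=(1,0)$, and then alternately move toward the origin, e.g. set $P_{2k}=(a_k,0)$ and $P_{2k+1}=(a_k,b_k)$ for suitable strictly monotone sequences; one concretely checks that a geometric-type choice such as $P_j$ lying on the coordinate axes in a pattern like $(1,0),(1,1),(0,1),(0,\tfrac12),\dots$ does not work directly because of collinearity, so I would instead take points in "general position" subject only to the consecutive rank-one constraints — e.g. $P_0 = E_1$, and define $P_{j+1} = P_j - \alpha_j E_{l_j}$ where $l_j$ alternates between $1$ and $2$ and the $\alpha_j>0$ are chosen generic (say, rationally independent) so that the only rank-one pairs among the $P_j$'s are the consecutive ones, and $P_n = 0$.

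The key steps, in order: (1) Fix the combinatorial structure: points $P_0,\dots,P_n$ with $P_n=0$, consecutive pairs rank-one compatible via alternating directions $E_1,E_2$, and $K:=\{P_0,\dots,P_n\}$, so $\mathrm{card}(K)=n+1$. (2) Show $0\in K^{(n)}$: since $P_{n-1},P_n$ are rank-one compatible, $[P_{n-1},P_n]\subset K^{(1)}$; more carefully, I show by downward induction that $P_{n-j}\in K^{(?)}$ — actually the cleanest is to show $[P_{n-j},P_n]\subset K^{(j)}$ for each $j$, using that $[P_{n-j},P_{n-j+1}]$ is a rank-one segment between a point of $K$ and (inductively) a point of $K^{(j-1)}$, and that rank-one compatibility of the endpoints $P_{n-j}$ and $P_{n-j+1}$ lets us laminate; since $0=P_n$, this gives $0\in K^{(n)}$. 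The direction-alternation is what makes each segment genuinely require a fresh lamination rather than staying inside the previous hull. (3) Show $0\notin K^{(n-1)}$: this is the delicate step. I would argue that $K^{(n-1)}$ is contained in a controlled region — e.g. prove by induction that $K^{(i)}$ lies in the union of the $i$ "staircase rectangles" spanned by consecutive batches of points, or more robustly, use a separating functional. Because the points are chosen generic, I expect $K^{(i)}$ to be exactly the union of the axis-parallel segments $[P_0,P_1]\cup\cdots$ "filled in" only up to depth $i$ from the origin; specifically I would show $0$ lies outside $K^{(n-1)}$ by exhibiting, for each point $Q\in K^{(n-1)}$, a coordinate in which $Q$ stays bounded away from $0$ (the first coordinate $a_k$ values, say, are bounded below by the $(n-1)$-st term of the decreasing sequence), using that laminating can only produce convex combinations within already-reached axis-parallel lines.

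The main obstacle is step (3): bounding $K^{(n-1)}$ from the outside. The lamination convex hull is not convex and is awkward to compute; the standard tool is to find a "laminate-closed" superset. I would look for a function $\Phi:\MM^{2\times2}_{\rm d}\to\RR$ that is separately convex (convex along each line $\mathrm{span}(E_1)$ and $\mathrm{span}(E_2)$), hence non-increasing under lamination in the sense $\max_{K^{(i)}}\Phi = \max_K \Phi$, and for which one can track that $\Phi(0)$ is not attained by $K^{(i)}$ until $i=n$ — for instance a "staircase" function counting how many of the monotone thresholds a point has crossed in each coordinate. Constructing such a $\Phi$ and verifying its separate convexity and the level-crossing bookkeeping is where the real work lies; everything else is routine given the explicit construction and Lemma~\ref{greatest-interval}. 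I would present the concrete sequence first, then prove membership at level $n$, then build the separately convex certificate ruling out level $n-1$.
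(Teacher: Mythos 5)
Your zig-zag intuition is exactly the one the paper uses, but the construction as written breaks the statement before it starts: you set $K:=\{P_0,\dots,P_n\}$ with $P_n=0$, so $0\in K=K^{(0)}\subset K^{(n-1)}$, and the conclusion $0\in K^{(n)}\setminus K^{(n-1)}$ is automatically false for every $n\ge 1$. The origin must \emph{not} be a vertex of $K$; it has to arise only as a strict convex combination at the final lamination step. In the paper's construction, the two outermost vertices span the first rank-one segment (and this is the only segment whose endpoints are both genuine vertices of $K$); each subsequent segment joins one \emph{new} vertex of $K$ to a non-vertex point interior to the previously built segment; and $0$ lies in the interior of the $n$-th and last segment. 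This uses $2+(n-1)=n+1$ vertices and keeps $0\notin K$.

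The certificate you sketch for step (3) also cannot work, even after the construction is repaired, and the obstruction is structural. If $\Phi$ is separately convex then, by induction along rank-one lines, $\sup_{K^{(i)}}\Phi\le\sup_K\Phi$ for every $i$; since $K\subset K^{(i)}$ this is an equality, as you yourself observe. But then there is no room for the threshold behaviour you are hoping for: if $\Phi(0)>\sup_K\Phi$ you have excluded $0$ from \emph{every} $K^{(i)}$, including $K^{(n)}$, contradicting step (2); if $\Phi(0)\le\sup_K\Phi$ you have excluded nothing. A separately convex function can only separate $0$ from $\bigcup_i K^{(i)}$ as a whole; it can never separate $0$ from $K^{(n-1)}$ while leaving it inside $K^{(n)}$. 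What is actually needed is a level-by-level computation. The paper places the new pair of vertices at each induction step with explicit genericity conditions (for instance $a_s^s<a_s^{s+1}<a_{s-3}^s$, $b_s^{s+1}=b_s^s$) so that the only new rank-one compatible pair in $K_{s+1}$ is the new segment itself, and then identifies $K_{s+1}^{(i)}$ explicitly and inductively. Your ``explicit staircase'' alternative at the end of step (3) is the right route, but you must control which pairs become rank-one compatible at every intermediate level, not just among the vertices of $K$; that bookkeeping is exactly what the paper's genericity conditions buy, and it is the work you were hoping the separately convex $\Phi$ would do for free.
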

\begin{proof} Assume that $n=1$. Set $K_1=\{W_0^1,W_1^1\}$ with
\[
W_0^1=\begin{pmatrix}0&0\\ \\0&-1\end{pmatrix}\quad\mbox{ and }\quad W_1^1=\begin{pmatrix}0&0\\ \\0&1\end{pmatrix}.
\]
Then $0={1\over 2}W_0^1+{1\over 2}W_1^1$, it follows that $0\in K_1^{(1)}=[W_0^1,W_1^1]$ and $0\notin K_1^{(0)}=K$. 

Assume that $n=2$. We consider $[W_1^2,W_2^2]\ni W_1^1$ where
\[
W_2^2=\begin{pmatrix}-1&0\\ \\0&1\end{pmatrix}\quad\mbox{ and }\quad W_1^2=\begin{pmatrix}1&0\\ \\0&1\end{pmatrix}.
\]
We set $W_0^2=W_0^1$ and $K_2=\{W_0^2,W_1^2,W_2^2\}$. Then $0\in K_2^{(2)}\setminus K_2^{(1)}$. 

The construction of $K_n$ for $n\ge 2$ follows by induction. For each $n\in\NN^\ast$, we denote $W_i^n:=\begin{pmatrix}a_i^n&0\\ 0&b_i^n\end{pmatrix}$ where $i\in\{0,\dots,n\}$, and $a_i^n,b_i^n\in\RR$.

Assume that for $n=s$ with $s\ge 2$, we found $K_s=\{W_i^s\in\MM^{2\times 2}_{\rm d}:i\in \{0,\dots,s\}\}$, such that $0\in K_s^{(s)}\setminus K_s^{(s-1)}$. Then define $K_{s+1}=\{W_i^{s+1}\in\MM^{2\times 2}_{\rm d}:i\in \{0,\dots,s+1\}\}$ as follows:
\begin{enumerate}[label=(\roman*)]
\item $W_i^{s+1}=W_i^s$ for all $i\in\{0,\dots,s-1\}$;\\

\item $W_s^{s+1}=\begin{pmatrix}a_s^{s+1}&0\\ \\ 0&b_s^{s+1}\end{pmatrix}$ and $W_{s+1}^{s+1}=\begin{pmatrix}a_{s+1}^{s+1}&0\\ \\ 0&b_{s+1}^{s+1}\end{pmatrix}$ satisfy:\\

\begin{itemize}
\item if $s$ is odd, then we choose $a_s^s<a_s^{s+1}<a_{s-3}^s$, $b_s^{s+1}=b_s^s$, and $a_{s+1}^{s+1}<a_s^s$, $b_{s+1}^{s+1}=b_s^s$; it follows that $K_{s+1}^{(1)}=[W_s^{s+1},W_{s+1}^{s+1}]$, $K_{s+1}^{(s)}=K_{s+1}^{(1)}\cup K_s^{(s-1)}$ and $K_{s+1}^{(s+1)}=K_{s+1}^{(s)}\cup [W_0^{s+1},W_1^1]$. Thus we have $0\in K_{s+1}^{(s+1)}\setminus K_{s+1}^{(s)}$;\\

\item if $s$ is even, then we choose $a_s^{s+1}=a_s^s$, $b_{s-2}^{s+1}<b_s^{s+1}<b_s^s$ and $a_{s+1}^{s+1}=a_s^{s+1}$, $b_{s+1}^{s+1}>b_{s}^{s+1}$; as above, the same conclusion can be drawn.
\end{itemize}
\end{enumerate}
\end{proof}
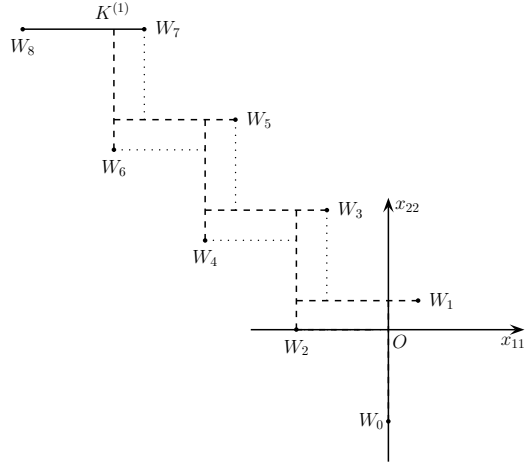
\begin{figure}[H]
\scalebox{0.5} 
{\fontsize{0.5cm}{0cm}\selectfont
\begin{pspicture}(0,-6.15)(15.04,6.17)
\psline[linewidth=0.04cm,arrowsize=0.2cm 2.0,arrowlength=1.4,arrowinset=0.4]{<-}(10,1)(10,-6)
\psline[linewidth=0.04cm,arrowsize=0.2cm 2.0,arrowlength=1.4,arrowinset=0.4]{<-}(13.58,-2.5)(6.38,-2.5)
\psdots[dotsize=0.12](10,-4.93)
\psdots[dotsize=0.12](10.78,-1.73)
\psdots[dotsize=0.12](7.58,-2.5)
\psdots[dotsize=0.12](8.38,0.67)
\psdots[dotsize=0.12](5.18,-0.13)
\psdots[dotsize=0.12](5.98,3.07)
\psdots[dotsize=0.12](2.78,2.27)
\psdots[dotsize=0.12](3.58,5.47)
\psdots[dotsize=0.12](0.38,5.47)
\psline[linewidth=0.04cm](0.38,5.47)(3.58,5.47)
\psline[linewidth=0.04cm,linestyle=dashed,dash=0.16cm 0.16cm](2.78,5.47)(2.78,2.27)
\psline[linewidth=0.04cm,linestyle=dashed,dash=0.16cm 0.16cm](2.78,3.07)(5.98,3.07)
\psline[linewidth=0.04cm,linestyle=dashed,dash=0.16cm 0.16cm](7.58,-2.5)(9.55,-2.5)
\psline[linewidth=0.04cm,linestyle=dashed,dash=0.16cm 0.16cm](5.18,3.07)(5.18,-0.13)
\psline[linewidth=0.04cm,linestyle=dashed,dash=0.16cm 0.16cm](5.18,0.67)(8.38,0.67)
\psline[linewidth=0.04cm,linestyle=dashed,dash=0.16cm 0.16cm](7.58,0.67)(7.58,-2.5)
\psline[linewidth=0.04cm,linestyle=dashed,dash=0.16cm 0.16cm](7.58,-1.73)(10.78,-1.73)
\psline[linewidth=0.04cm,linestyle=dotted,dotsep=0.16cm](3.58,5.47)(3.58,3.07)
\psline[linewidth=0.04cm,linestyle=dotted,dotsep=0.16cm](2.78,2.27)(5.18,2.27)
\psline[linewidth=0.04cm,linestyle=dotted,dotsep=0.16cm](5.98,3.07)(5.98,0.67)
\psline[linewidth=0.04cm,linestyle=dotted,dotsep=0.16cm](5.18,-0.13)(7.58,-0.13)
\psline[linewidth=0.04cm,linestyle=dotted,dotsep=0.16cm](8.38,0.67)(8.38,-1.73)
\psline[linewidth=0.04cm,linestyle=dotted,dotsep=0.16cm](7.58,-2.5)(10,-2.5)
\psline[linewidth=0.04cm,linestyle=dashed,dash=0.16cm 0.16cm](10,-1.73)(10,-4.93)
\usefont{T1}{ptm}{m}{n}
\rput(0.38,5.0){$W_{8}$}
\usefont{T1}{ptm}{m}{n}
\rput(2.78,1.8){$W_{6}$}
\usefont{T1}{ptm}{m}{n}
\rput(5.18,-0.6){$W_{4}$}
\usefont{T1}{ptm}{m}{n}
\rput(7.58,-2.97){$W_{2}$}
\usefont{T1}{ptm}{m}{n}
\rput(9.55,-4.93){$W_{0}$}
\usefont{T1}{ptm}{m}{n}
\rput(11.4,-1.73){$W_{1}$}
\usefont{T1}{ptm}{m}{n}
\rput(9,0.67){$W_{3}$}
\usefont{T1}{ptm}{m}{n}
\rput(6.6,3.07){$W_{5}$}
\usefont{T1}{ptm}{m}{n}
\rput(4.2,5.47){${W_{7}}$}
\usefont{T1}{ptm}{m}{n}
\rput(2.74,5.975){$K^{(1)}$}
\usefont{T1}{ptm}{m}{n}
\rput(10.3,-2.825){$O$}
\usefont{T1}{ptm}{m}{n}
\rput(13.28,-2.825){$x_{11}$}
\usefont{T1}{ptm}{m}{n}
\rput(10.5,0.775){$x_{22}$}
\end{pspicture} 
}
{\small\caption{Higher level of laminations: $L=8$.}\label{fig0}}
\end{figure}
\subsection{Approximation and local estimate result}
Let $Q=(a,b)\times (c,d)$ be a rectangle in $\RR^2$, with
$a,b,c,d\in\RR_{+}$. Let $\{W,F,G\}\subset\MM^{2\times 2}_{\rm d}$ be
defined by
\[
W=\begin{pmatrix}\alpha_0&0\\0&\beta_0\end{pmatrix},\quad
F=\begin{pmatrix}\alpha_1&0\\0&\beta_1\end{pmatrix}\mbox{ and }G=\begin{pmatrix}\alpha_2&0\\0&\beta_2\end{pmatrix},
\]
where $\alpha_i,\beta_i\in\RR$ with $i\in\{0,1,2\}$. Let $v_1,v_2\in
\C(\overline{Q};\RR)$ be such
that\footnote{It means that $v_1$ (resp. $v_2$) does not depend on $x_2$
  (resp. $x_1$).}
$v_i(x)=v_i(x_i)$ for all $i=1,2$. Consider $v=(v_1,v_2):\overline{Q}\to\RR^2$ be such that
\begin{eqnarray*}
\nabla v(\cdot)=W=\begin{pmatrix}\alpha_0&0\\0&\beta_0\end{pmatrix}
\;\mbox{ on }\;Q,\mbox{ with }(\alpha_0,\beta_0)\in\RR^2.
\end{eqnarray*}
Let $\delta_1=b-a$ and $\delta_2=d-c$. Set 
\[
\partial_1 Q:=\{(x_1,x_2)\in\partial Q: x_1=a \mbox{ or
}x_1=b\}
\] 
and 
\[
\partial_2 Q:=\{(x_1,x_2)\in\partial Q: x_2=c \mbox{ or
}x_2=d\}.
\] 
Let $k\in\NN^*$. Let $\tau,\sigma\in\RR$ such that $\tau-\sigma>0$, and $\mu\in ]0,1[$, we associate the set
\[
Q_i(\tau-\sigma,\mu,k)=\left\{x\in Q:\dist(x,\partial_j Q)>(\tau-\sigma)\mu\frac{\delta_i}{k}\right\}
\]
where $i,j\in\{1,2\}$ satisfy $ij=2$. 

Let $e\in\{a,c\}$, $p\in\{0,\dots,k-1\}$, $i\in\{1,2\}$, and $\mu_1,\mu_2\in [0,1]$, we set
\[
I_i(e,p,\mu_1,\mu_2,k):=\left]e+(p+\mu_1){\delta_i\over k},e+(p+\mu_2){\delta_i\over k}\right[.
\]
The following result is well-known, it allows us first to build Lipschitz functions with prescribed rank-one compatible gradients, and to give the error estimate for the set of ``bad gradients". It is used in the proof of Theorem~\ref{main result} for the construction of $\widehat{u}$.
\begin{lemma}\label{prop1} Assume that for some $\lambda\in ]0,1[$
\[
W=\lambda F+(1-\lambda)G\mbox{ and }\mbox{\rm rk}(F-G)=1.
\] 
Let $k\in\NN^\ast$. There exists a Lipschitz function $w:\overline{Q}\to \RR^2$ satisfying 
\begin{enumerate}[label=(\roman*)]
\item\label{i-local-approx}
$
w=v \mbox{ on }\partial Q\quad\mbox{ and }\quad
\Vert\nabla w\Vert_\infty\le 2(1+\Vert\nabla v\Vert_\infty);\\
$

\item\label{ii-local-approx}  if $F-G\in \Vect(E_1)$ then
\[
\begin{array}{ll}
\nabla w(x)=A & \mbox{ on strips }\;\displaystyle\bigcup_{p=0}^{{k}-1}I_1(a,p,0,\mu,k)\times ]c,d[\;\cap\;Q_\mu; \\ 
\nabla w(x)=B & \mbox{ on strips }\;\displaystyle\bigcup_{p=0}^{{k}-1}I_1(a,p,\mu,1,k)\times ]c,d[\;\cap\;Q_\mu,
\end{array}
\]
where
\begin{enumerate}[label=(\alph*)]
\item if $\alpha_1-\alpha_0>0$ then $A=F,\;B=G,\; \mu=\lambda, Q_\mu=Q_1(\alpha_1-\alpha_0,\mu,k)$ and 
\[
\big\vert\big\{x\in Q:\nabla
  w(x)\notin\{F,G\}\big\}\big\vert\le \vert F-G\vert\lambda{(b-a)^2\over k};
\]
\item if $\alpha_2-\alpha_0>0$ then $A=G,\; B=F,\; \mu=1-\lambda, Q_\mu=Q_1(\alpha_2-\alpha_0,\mu,k)$ and
\[
\big\vert\big\{x\in Q:\nabla
  w(x)\notin\{F,G\}\big\}\big\vert\le \vert F-G\vert(1-\lambda){(b-a)^2\over k};
\]
\end{enumerate}
\item\label{iii-local-approx} if $F-G\in \Vect(E_2)$ then
\[
\begin{array}{ll}
\nabla w(x)=A & \mbox{ on strips }\;\displaystyle\bigcup_{p=0}^{{k}-1}]a,b[\times I_2(c,p,0,\mu,k)\;\cap\;Q_\mu; \\ 
\nabla w(x)=B & \mbox{ on strips }\;\displaystyle\bigcup_{p=0}^{{k}-1}]a,b[\times I_2(c,p,\mu,1,k)\;\cap\;Q_\mu,
\end{array}
\]
where
\begin{enumerate}[label=(\alph*)]
\item if $\beta_1-\beta_0>0$ then $A=F,\;B=G,\; \mu=\lambda, Q_\mu=Q_2(\beta_1-\beta_0,\mu,k)$ and 
\[
\big\vert\big\{x\in Q:\nabla
  w(x)\notin\{F,G\}\big\}\big\vert\le \vert F-G\vert\lambda{(d-c)^2\over k};
\]
\item if $\beta_2-\beta_0>0$ then $A=G,\; B=F,\; \mu=1-\lambda, Q_\mu=Q_1(\beta_2-\beta_0,\mu,k)$ and
\[
\big\vert\big\{x\in Q:\nabla
  w(x)\notin\{F,G\}\big\}\big\vert\le \vert F-G\vert(1-\lambda){(d-c)^2\over k}.
\]
\end{enumerate}

\end{enumerate}
\end{lemma}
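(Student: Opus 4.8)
The plan is to construct the function $w$ explicitly as a one-dimensional laminate in the direction in which $F-G$ points, glued to the affine function $v$ near the relevant part of the boundary by a linear interpolation (a ``cut-off'' in the transversal variable). I will treat the case $F-G\in\Vect(E_1)$ in detail, the case $F-G\in\Vect(E_2)$ being obtained by swapping the roles of the two coordinates; within the first case the two subcases (a) and (b) differ only by which of $F,G$ is assigned to which strip, which is forced by the requirement in \ref{i-local-approx} that the first coordinate of $w$ increase in $x_1$ at the same rate $\alpha_0$ on average as $v_1$ does.

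First I would define $w_2(x)=v_2(x_2)$, so that the second component of $w$ simply copies that of $v$; this already gives $\partial_2 w$ the value $\beta_0$ everywhere and makes the $E_2$-row of $\nabla w$ agree with $\nabla v$. For the first component I would partition $(a,b)$ into $k$ equal subintervals of length $\delta_1/k$ and, inside each subinterval, let $w_1$ be affine with slope $\alpha_1$ on the first fraction $\mu=\lambda$ of it and slope $\alpha_2$ on the remaining fraction $1-\lambda$ (using $\alpha_0=\lambda\alpha_1+(1-\lambda)\alpha_2$ to make these pieces match continuously across subinterval boundaries and to ensure $w_1=v_1$ at $x_1=a,b$); then $\nabla w=F$ on the first set of strips and $\nabla w=G$ on the second, which is the interior content of \ref{ii-local-approx}. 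The Lipschitz bound $\|\nabla w\|_\infty\le 2(1+\|\nabla v\|_\infty)$ I would get by noting $|\alpha_1|,|\alpha_2|\le\max(|\alpha_1|,|\alpha_2|)$ and $F=\nabla v+(1-\lambda)(F-G)$, $|F-G|\le |F|+|G|$, etc., absorbing everything into the stated constant; here the factor $2$ and the ``$1+$'' are exactly the slack one needs.

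The remaining point, and the only genuinely delicate one, is matching $w$ to $v$ on all of $\partial Q$, not just on $\partial_1 Q$: on $\partial_2 Q$ (the horizontal edges $x_2=c$ and $x_2=d$) the laminated $w_1$ does \emph{not} equal $v_1$ unless we modify it near those edges. This is why the sets $Q_i(\cdot,\mu,k)$ and the intervals $I_i$ are introduced: I would insert, in a boundary layer of width proportional to $(\tau-\sigma)\mu\,\delta_i/k$ (with $\tau-\sigma=|\alpha_1-\alpha_0|$ or $|\alpha_2-\alpha_0|$ as appropriate), a linear interpolation in $x_2$ between the laminate value of $w_1$ and $v_1$, the width being chosen just large enough that the interpolating slope in $x_1$ stays within the required Lipschitz bound. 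This layer, together with the thin transition seams at $x_1=a+(p+\mu)\delta_1/k$ where the slope jumps between $\alpha_1$ and $\alpha_2$, are precisely the region where $\nabla w\notin\{F,G\}$; its measure I would bound by summing, over the $k$ subintervals, the area of a region of width $O(\delta_1/k)$ and height $O(\delta_2)$ inside each, plus the area of the boundary layer, which reorganizes into the stated bound $|F-G|\lambda(b-a)^2/k$ (and its $(1-\lambda)$ counterpart). So the main obstacle is bookkeeping: choosing the boundary-layer width so that the Lipschitz constant is respected, the continuity with $v$ on $\partial Q$ is exact, and the ``bad set'' still has measure $O(1/k)$ with the precise constant claimed; once the geometry of these layers is set up correctly the estimates are routine.
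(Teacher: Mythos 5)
Your overall plan — laminate in the $x_1$-variable so that $\partial_1 w_1$ alternates between $\alpha_1$ and $\alpha_2$ at scale $\delta_1/k$, keep $w_2=v_2$ untouched, and then cut off near the horizontal edges $\partial_2 Q$ so as to match $v$ there — is exactly the paper's strategy. Where the paper is more precise, and where your write-up goes astray, is in the form of the cut-off and the accounting of the bad set.

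The paper does not use a linear interpolation in a layer of prescribed width; it sets $\hat v_1(x)=\min\{\omega_1(x_1),\,v_1(x_1)+\dist(x,\partial_2 Q)\}$, where $\omega_1$ is the periodic laminate. Since $\omega_1-v_1$ is a periodic ``tent'' of height $\lambda(\alpha_1-\alpha_0)\delta_1/k$ over each period, the set where the minimum switches to the distance term, which is precisely the set where $\nabla w\notin\{F,G\}$, is a union of $2k$ triangles of base $\delta_1/k$ and height $\le\lambda(\alpha_1-\alpha_0)\delta_1/k$. Its measure is $2\int_a^b(\omega_1-v_1)=\lambda(1-\lambda)|\alpha_1-\alpha_2|\,\delta_1^2/k\le\lambda|F-G|(b-a)^2/k$, and the min with the $1$-Lipschitz distance function automatically controls $\Vert\nabla w\Vert_\infty$.

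Two things in your proposal do not give this. First, your bookkeeping of the bad set is not consistent with the claimed bound: you sum, over the $k$ subintervals, ``a region of width $O(\delta_1/k)$ and height $O(\delta_2)$'', which totals $O(\delta_1\delta_2)$ and does not vanish as $k\to\infty$. The interior seams $x_1=a+(p+\mu)\delta_1/k$ where $\partial_1\omega_1$ jumps are lines of measure zero and contribute nothing; the only positive-measure contribution is the boundary correction near $\partial_2 Q$, whose thickness must be $O(\delta_1/k)$, not $O(\delta_2)$. Second, a boundary layer of constant width $\lambda(\alpha_1-\alpha_0)\delta_1/k$ gives a strip of area $2\lambda(1-\lambda)|F-G|(b-a)^2/k$, which exceeds the stated $\lambda|F-G|(b-a)^2/k$ whenever $\lambda<1/2$; to obtain the stated constant you need the layer thickness to follow $\omega_1(x_1)-v_1(x_1)$ pointwise (so that the bad set is the union of triangles described above), which is exactly what the $\min$ construction does for free. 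A minor remark: the quantity the layer width must control is the $x_2$-derivative $\partial_2\hat v_1$, not ``the interpolating slope in $x_1$''; the $x_1$-slope is already $\alpha_1$, $\alpha_2$, or a convex combination of them and is harmless.
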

\begin{proof} Let $\mu\in ]0,1[$ and let $\chi_\mu$ be the periodic function of period $1$
  defined on $]0,1[$ by
\begin{align}\label{chi-mu}
\chi_\mu(\tau)=\left\{
\begin{array}{ll} 1 & \mbox{ if
    }\tau\in ]0,\mu]   \\ 
 0 & \mbox{ if
    }\tau\in ]\mu,1[.   
\end{array}
\right.
\end{align}
Note that we have either $\alpha_1-\alpha_0>0$ or $\alpha_2-\alpha_0>0$ since $\alpha_1-\alpha_0=(1-\lambda)(\alpha_1-\alpha_2)$ and $\alpha_2-\alpha_0=\lambda(\alpha_2-\alpha_1)$. 
\begin{enumerate}[label=(\alph*)]
\item If $\alpha_1-\alpha_0>0$, then we define $\omega_1:\overline{Q}\to \RR$ by
\[
\omega_1(x)=\omega_1(x_1)=v_1(a)+\int_{(a,x_1)}\left((\alpha_1-\alpha_2)\chi_{\lambda}\left({k\over\delta_1}{(t-a)}\right)+\alpha_2\right)dt.
\]
\item If $\alpha_2-\alpha_0>0$, then we define $\omega_1:\overline{Q}\to \RR$ by
\[
\omega_1(x)=\omega_1(x_1)=v_1(a)+\int_{(a,x_1)}\left((\alpha_2-\alpha_1)\chi_{1-\lambda}\left({k\over\delta_1}{(t-a)}\right)+\alpha_1\right)dt.
\]
\end{enumerate}
Let
$\hat{v}_1:\overline{Q}\to \RR$ be defined by
\[
\hat{v}_1(x):=\min\left\{\omega_1(x),v_1(x)+\dist(x,\partial_2 Q)\right\}.
\]
It is easy to verify that the function $w:=(\hat{v}_1,v_2)$ satisfies~\ref{i-local-approx} and\ref{ii-local-approx}. Similar constructions lead to~\ref{iii-local-approx}. 
\end{proof}

\begin{remark} Consider $W\in K^{(i-1)}$ for some $i\in\NN^*$ and some compact subset $K\subset\MM^{2\times 2}_{\rm d}$.  Let $F,G\in K^{(i)}$ such that $W\in [F,G]$ and let us call ${\rm diam}(K^{(i)})$ the diameter of $K^{(i)}$.  Then under the assumptions of the Lemma~\ref{prop1} and by setting $v^{(i-1)}=v$, we have that for every $k\in\NN^*$ there exists $v^{(i)}\in v^{(i-1)}+W^{1,\infty}_0(Q;\RR^2)$ satisfying
\[
\left\vert\left\{x\in Q:\nabla v^{(i)}(x)\notin K^{(i)}\right\}\right\vert\le {\rm diam}(K^{(i)}){{\rm diam}(Q)^2\over k}.
\]
\end{remark}

\bigskip

\bigskip

 We will divide the proof of Theorem~\ref{main result} into two sections. The first section is concerned with the case $L=1,2$ and the second one with $L\ge 2$. Throughout the proof, we denote by $C$ a positive constant which does not depend on the mesh size $h>0$. Let us denote by $\vert\partial\Omega\vert$ the length of the boundary of $\Omega$, $\delta={\rm diam}(\Omega)$ the diameter of $\Omega$ and $\widehat{\Omega}=\Omega\cap\left(\cup\{{\rm int}(T):T\in\T_h\}\right)$ for any $h>0$.
\section{Proof of Theorem\ref{main result}: $L\in\{1,2\}$}
\subsection{One lamination: $L=1$}\label{L=1}

\subsubsection*{Geometry} Since $0\in K^{(1)}$ there exists $\{W_1,W_2\}\subset K$ such that
\[
0\in [W_1,W_2] \;\mbox{ and }\;\Rk(W_1-W_2)\le 1.
\]
Obviously $\Rk(W_1-W_2)=1$ since $L=1$. Let $\lambda\in ]0,1[$ be such that $0=\lambda W_1+(1-\lambda)W_2$. Without loss of generality, we can assume that 
\[
W_1=\begin{pmatrix}a_1&0\\0&0\end{pmatrix}\;\mbox{ and }\;W_2=\begin{pmatrix}a_2&0\\0&0\end{pmatrix},
\]
with $a_1>0$ and $a_2={\lambda\over \lambda-1}a_1$. Set $\Sigma=\{W_1, W_2\}\subset K$, we have ${\rm card}(\Sigma)=2\le 2^{2}$ and $0\in \Sigma^{(1)}$.

\bigskip

Now, the proof consists in the construction of an element $\widehat{u}\in V_0^h$ such that $\nabla \widehat{u}(\cdot)\in \Sigma$  except a small set which will be evaluated in terms of the mesh size $h$.

\subsubsection*{Construction} Let $\alpha\in ]0,1[$ and $h>0$ such that 
\begin{equation}\label{hdelta}
h<1\hbox{ and }h^\alpha\le \delta.
\end{equation}
Let $v^{(1)}:[0,h^\alpha]\times \RR\to \RR^2$ be defined
by $v^{(1)}(x):=(v_1(x),0)$ where 
\[
v_1(x_1,x_2)=\left\{
\begin{array}{ll} a_1x_1 & \mbox{ if
    }(x_1,x_2)\in [0,\lambda h^\alpha] \times \RR  \\ \\
a_2(x_1-h^\alpha)  & \mbox{ if
    }(x_1,x_2)\in ]\lambda h^\alpha,h^\alpha] \times \RR.   
\end{array}
\right.\]
It is clear that $\nabla v^{(1)}(\cdot)\in \Sigma=\{W_1,W_2\}$ in $]0,h^\alpha[\times \RR$. Since for every $x_2\in\RR$ we have $v^{(1)}(0,x_2)=v^{(1)}(h^\alpha,x_2)=0$, we extend by periodicity in the direction $x_1$ with period $h^\alpha$, we obtain a function still denoted $v$. Consider the restriction $w=({v_1}{\lfloor_\Omega},0)$. Now, in order to match the boundary conditions we set 
\[\widetilde{w}(\cdot):=\left(\min\left\{{v_1}{\lfloor_\Omega}(\cdot),\dist(\cdot,\partial\Omega)\right\},0\right).\]
Finally we consider the interpolant $u^\alpha$ of $\widetilde{w}$ on $\T_h$. 

\medskip

\subsubsection*{Error estimation} Let us evaluate $\Err^\Sigma(u^\alpha)=\lvert\{x\in\widehat{\Omega}: \nabla u^\alpha(x)\notin
  \Sigma\}\rvert$ in terms of $h$. 

Let $D_p:=\left(\{ph^\alpha\}\times \RR\right)\cap\Omega$ where $p\in\ZZ$. Note that the number $N_h={\rm card}\{p\in\ZZ: D_p\ne\emptyset\}$ of lines $x_1=ph^\alpha$ lying in $\Omega$ satisfies
\begin{equation}\label{number L1}
(N_h-1)h^\alpha\le \delta.
\end{equation}
By \eqref{hdelta} and \eqref{number L1} we obtain
\begin{equation}\label{number}
h\le h^\alpha\;\mbox{ and } \; N_h\le 2\delta h^{-\alpha}.
\end{equation}
It is sufficient to estimate the measure of the set $\{x\in \widehat{\Omega}: w(x)\not=u^\alpha(x)\}$. Indeed we have that $\{x\in \widehat{\Omega}: w(x)=u^\alpha(x)\}\subset \{x\in\widehat{\Omega}: \nabla u^\alpha(x)\in \Sigma\}$. We have $\{x\in \widehat{\Omega}: w(x)\not=u^\alpha(x)\}\subset V_1\cup V_2$ where $V_1$ and $V_2$ are given as follows.
\begin{enumerate}[label=(\roman*)]
\item $V_1$ is a neighborhood (in the direction $x_2$) of the lines $x_1=ph^\alpha$ in $\Omega$ of width $2h$  where possibly $w\not=u^\alpha$, i.e.,
\[ 
V_1:=\left\{x\in\Omega:\dist(x,D_p)<2h \mbox{ for some } p\in\ZZ\right\};
\]
its measure satisfies
\begin{equation}\label{onelamV1}
\vert V_1\vert\le \delta 2h N_h.
\end{equation}
\item $V_2=V_2^1\cup V_2^2$ where
 \[
V_2^1:=\left\{x\in\Omega: \dist(x,\partial\Omega)<\Vert w_1\Vert_{L^\infty(\Omega;\RR^2)}\right\}\mbox{ and } V_2^2:=\left\{x\in\Omega:\dist(x,\Gamma_h)<2h\right\}
\]
with $\Gamma_h=\{x\in\Omega:\dist(x,\partial\Omega)=\Vert w_1\Vert_{L^\infty(\Omega;\RR^2)}\}$ and $\Vert w_1\Vert_{L^\infty(\Omega;\RR^2)}=\lambda a_1h^\alpha$;
\begin{itemize} 
\item $V_2^1$ is the error introduced by the boundary condition whose measure satisfies
\[
\left\vert V_2^1\right\vert\le \vert\partial\Omega\vert \lambda a_1 h^{\alpha};
\] 
\item $V_2^2$ is the error introduced by the interpolation near $\partial\Omega$ whose measure satisfies
\[
\left\vert V_2^2\right\vert\le \vert\partial\Omega\vert 2h.
\] 
\end{itemize}
Therefore 
\begin{equation}\label{onelamV2}
\vert V_2\vert\le\vert\partial\Omega\vert (\lambda a_1h^\alpha+2h).
\end{equation}
\end{enumerate}
Collecting \eqref{onelamV1} and \eqref{onelamV2}, we obtain  
\[
\Err^\Sigma(u^\alpha)\le \vert V_1\vert+\vert V_2\vert\le\vert\partial\Omega\vert (\lambda a_1 h^{\alpha}+2h)+\delta 2h N_h.
\]
Using (\ref{number}), we find for every $\alpha\in ]0,1[$
\[
\Err^\Sigma(u^\alpha)\le C\left(h^\alpha+h^{1-\alpha}\right),
\]
where $C=\max\big(\vert\partial\Omega\vert\max\{\sup_{\xi\in \Sigma}\vert\xi\vert,2\},4\delta^2\big)$. Now, it is easy to see that the function $\alpha\mapsto h^\alpha+h^{1-\alpha}$ is minimum for $\widehat\alpha:={1\over 2}$. Set $\widehat{u}:=u^{\widehat \alpha}$. It follows that for some $C>0$
\[
\Errm^K\le\Errm^\Sigma\le\Err^\Sigma(\widehat u)\le C h^{1\over 2}.
\]
Note that since $\Vert\nabla \widetilde{w}\Vert_{L^\infty(\Omega;\RR^2)}\le \Vert\nabla v^{(1)}\Vert_{L^\infty(\Omega;\RR^2)}\le \sup_{\xi\in \Sigma}\vert\xi\vert$ and the mesh is regular, we deduce that $\widehat{u}\in V_{0,C}^h$ for some $C>0$ independent of $h$. The proof is complete for the case $L=1$.
\subsection{Two laminations: $L=2$}\label{L=2} 
\subsubsection*{Geometry} In this case $0\in K^{(2)}$. There exists $\{\widetilde{F}_1^1,\widetilde{F}_1^2\}\subset K^{(1)}$ such that 
\[
0\in [\widetilde{F}_1^1,\widetilde{F}_1^2]\;\mbox{ and }\;\Rk(\widetilde{F}_1^1-\widetilde{F}_1^2)\le 1.
\]
Obviously $\Rk(\widetilde{F}_1^1-\widetilde{F}_1^2)= 1$ since $L=2$. There exists $l\in\{1,2\}$ such that $[\widetilde{F}_1^1,\widetilde{F}_1^2]\subset \widetilde{F}_1^1+\Vect(E_l)$. By Remark~\ref{greatest-interval-remark}, we consider the greatest interval $[F_1^1,F_1^2]$ containing $[\widetilde{F}_1^1,\widetilde{F}_1^2]$ and such that $\{F_1^1,F_1^2\}\subset K^{(1)}$. One of the two possibilities can occur: either $\{F_1^1,F_1^2\}\subset K^{(1)}\setminus K$, or $\{F_1^1,F_1^2\}\cap K$ contains one element only. So we divide the proof into two cases.


\paragraph*{\em Case 1.} We assume that 
\begin{equation}\label{case1_L_2}
\{F_1^1,F_1^2\}\subset K^{(1)}\setminus K\;\mbox{ and }\;F_1^1-F_1^2\in\Vect(E_l)\;\mbox{ for some }l\in\{1,2\}.
\end{equation} 
There exists $\{W_1,W_2\}\subset K$ such that $F_1^1\in [W_1,W_2]$ and $\Rk(W_1-W_2)\le 1$. Since $F_1^1\notin K$, we deduce that $\Rk(W_1-W_2)=1$. Assume that $W_1-W_2\in \Vect(E_l)$, since $[F_1^1,F_1^2]$ is maximal we have $[W_1,W_2]\subset [F_1^1,F_1^2]$. It follows that $F_1^1\in\{W_1,W_2\}$ which contradicts (\ref{case1_L_2}). Thus $W_1-W_2\in\Vect(E_{3-l})$. The same conclusion can be drawn for $F_1^2$ with $\{W_3,W_4\}\subset K$ satisfying $F_1^2\in [W_3,W_4]$ and $\Rk(W_3-W_4)=1$. 

Now, consider the projection of $W_1$ and resp. $W_2$ parallel to $\Vect(E_l)$ on $W_3+\Vect(E_{3-l})$, which we denote $G_3$ and resp. $G_4$. Either $[G_3,G_4]\subset [W_3,W_4]$ or $\{G_3,G_4\}\cap [W_3,W_4]$ contains one element only (if not consider the projection of $W_3,W_4$ in place of $W_1,W_2$). Assume that $[G_3,G_4]\subset [W_3,W_4]$. Set $K^\prime=\{W_1,W_2,G_3,G_4\}$, note that $\{G_3,G_4\}\subset K^{(1)}$. We can choose $\{W_1^\prime,W_2^\prime,W_3^\prime,W_4^\prime\}\subset K^\prime$ in order to have
\begin{gather*}
W_1^\prime=\begin{pmatrix}a_1&0\\0&b_1\end{pmatrix}\quad  
W_2^\prime=\begin{pmatrix}a_1&0\\0&b_2\end{pmatrix}\quad   
W_3^\prime=\begin{pmatrix}a_2&0\\0&b_2\end{pmatrix}\quad  
W_4^\prime=\begin{pmatrix}a_2&0\\0&b_1\end{pmatrix},
\end{gather*}
with $a_1,b_1\ge 0$, $a_2,b_2\le 0$, satisfying $a_1-a_2>0$ and
$b_1-b_2>0$.

Assume that $\{G^\prime\}=\{G_3,G_4\}\cap [W_3,W_4]$, and let $H^{\prime\prime}\in\{W_1,W_2\}$ whose projection is $G^\prime$. Consider the projection of $W_3$ and resp. $W_4$ parallel to $\Vect(E_l)$ on $W_1+\Vect(E_{3-l})$, which we denote $G_1$ and resp. $G_2$. Then $\{G_1,G_2\}\cap [W_1,W_2]$ contains one element denoted $H^\prime$ which corresponds to the projection of $G^{\prime\prime}\in\{W_3,W_4\}$. Note that $\{H^\prime,G^{\prime\prime}\}\subset K^{(1)}$. Set $K^\prime=\{H^{\prime\prime}, G^\prime,H^\prime,G^{\prime\prime}\}$. We can choose $\{W_1^\prime,W_2^\prime,W_3^\prime,W_4^\prime\}\subset K^\prime$ as above. 

Let $j\in\{1,2,3,4\}$. There exists $\{W_j^1,W_j^2\}\subset K$ such that $W_j^\prime\in [W_j^1,W_j^2]$ and $\Rk(W_j^1-W_j^2)\le 1$. We set 
$
\Sigma=\cup_{j=1}^4\{W_j^1,W_j^2\}\subset K,
$
satisfying ${\rm card}(\Sigma)=8=2^{1+L}$ and $0\in \Sigma^{(2)}$.


\paragraph*{\em Case 2.} Assume that $\{\widetilde{F}_1^1,\widetilde{F}_1^2\}\cap K$ contains one element only. Let $\lambda\in]0,1[$ be such that $0=\lambda \widetilde{F}_1^1+(1-\lambda)\widetilde{F}_1^2$. Without loss of generality, we assume that $\widetilde{F}_1^1=W_1\in K$, $W_1-\widetilde{F}_1^2\in\Vect(E_1)$, and
\begin{gather*}
W_1=\begin{pmatrix}a_1&0\\0&0\end{pmatrix}\;\mbox{ and }\;
\widetilde{F}_1^2 =\begin{pmatrix}a_2&0\\0&0\end{pmatrix}
\end{gather*}  
with $a_1>0$ and $a_2={\lambda\over \lambda-1}a_1$. Since $0\in K^{(2)}$, there exists $\{W_2,W_3\}\subset K$ such that $\widetilde{F}_1^2=\mu W_2+(1-\mu)W_3$ for some $\mu\in ]0,1[$, with 
\begin{gather*}
W_2=\begin{pmatrix}a_2&0\\0&b_2\end{pmatrix}\;\mbox{ and }\;
W_3 =\begin{pmatrix}a_2&0\\0&b_3\end{pmatrix}.
\end{gather*}  
Without loss of generality, we assume that $b_2-b_3>0$. We set $\Sigma=\{W_1,W_2,W_3\}$ which satisfies ${\rm card}(\Sigma)\le 2^{1+L}$ and $0\in \Sigma^{(2)}$.

\bigskip

Now, we deal with the construction of $\widehat{u}\in V_0^h$ such that $\nabla \widehat{u}(\cdot)\in \Sigma$ except a small set which will be evaluated in terms of the mesh size $h$. 

\subsubsection*{Construction: {\it Case $\it 1$}} Let $\alpha\in ]0,1[$ and $h>0$ such that 
\begin{equation}\label{hdelta2}
h<1\hbox{ and }h^\alpha\le \delta.
\end{equation}
Roughly, the strategy is to begin by building a function $v^{(1)}$ on the square $[0,2h^\alpha]^2$ such that $\nabla v^{(1)}(\cdot)\in K^\prime$ a.e.. Then, since each $W_j^\prime\in [W_j^1,W_j^2]$, we modify the function into $v^{(2)}$ in order to have $\nabla v^{(2)}(\cdot)\in \Sigma=\cup_{j=1}^4\{W_j^1,W_j^2\}\subset K$ except a small set of ``bad gradients". 

Let $v^{(1)}:[0,2h^\alpha]^2\to\RR^2$ be defined by $v^{(1)}:=(v_1,v_2)$ where
\[
v_1(x_1,x_2):=\left\{
\begin{array}{ll} a_1x_1 & \mbox{ if
    }(x_1,x_2)\in[0,2h^\alpha s_1]\times [0,2h^\alpha]  \\ a_2x_1-2h^\alpha a_2 & \mbox{ if
    }(x_1,x_2)\in[2h^\alpha s_1,2h^\alpha]\times [0,2h^\alpha]   
\end{array}
\right.\]

\[
v_2(x_1,x_2):=\left\{
\begin{array}{ll}
 b_1x_2 & \mbox{ if }(x_1,x_2)\in [0,2h^\alpha]\times [0,2h^\alpha s_2]   \\
  b_2x_2-2h^\alpha b_2 & \mbox{ if }(x_1,x_2)\in [0,2h^\alpha]\times[2h^\alpha s_2,2h^\alpha]
  
\end{array}
\right.
\]
with $s_1={a_2\over a_2-a_1}$ and $s_2={b_2\over b_2-b_1}$. We have 
\[
\nabla v^{(1)}(\cdot)\in K^\prime:=\big\{W_1^\prime,W_2^\prime,W_3^\prime,W_4^\prime\big\} \;\;\mbox{ in }]0,2h^\alpha[^2.
\]
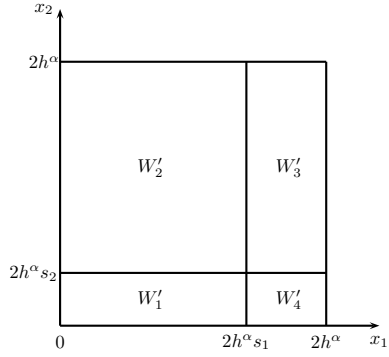
\begin{figure}[H]
\scalebox{0.7} 
{
\begin{pspicture}(0,-3.4)(7,3.4)
\psline[linewidth=0.04cm,arrowsize=0.05291667cm 2.0,arrowlength=1.4,arrowinset=0.4]{<-}(0,3)(0,-3)
\psline[linewidth=0.04cm,arrowsize=0.05291667cm 2.0,arrowlength=1.4,arrowinset=0.4]{<-}(6,-3)(0,-3)
\psline[linewidth=0.04cm](5,-3)(5,2)
\psline[linewidth=0.04cm](0,2)(5,2)
\psline[linewidth=0.04cm](0,-2)(5,-2)
\psline[linewidth=0.04cm](3.5,2)(3.5,-3)

\usefont{T1}{ptm}{m}{n}
\rput(6,-3.3){$x_1$}
\usefont{T1}{ptm}{m}{n}
\rput(-0.3,3){$x_2$}
\usefont{T1}{ptm}{m}{n}
\rput(0,-3.3){$0$}
\usefont{T1}{ptm}{m}{n}
\rput(1.7,-2.5){$W_1^\prime$}
\usefont{T1}{ptm}{m}{n}
\rput(4.3,-2.5){$W_4^\prime$}
\usefont{T1}{ptm}{m}{n}
\rput(1.7,0){$W_2^\prime$}
\usefont{T1}{ptm}{m}{n}
\rput(4.3,0){$W_3^\prime$}

\usefont{T1}{ptm}{m}{n}
\rput(5,-3.3){$2h^\alpha$}

\usefont{T1}{ptm}{m}{n}
\rput(3.5,-3.3){$2h^\alpha s_1$}

\usefont{T1}{ptm}{m}{n}
\rput(-0.3,2){$2h^\alpha$}

\usefont{T1}{ptm}{m}{n}
\rput(-0.5,-2){$2h^\alpha s_2$}

\end{pspicture} 
}
{\small{\caption{Domain of $\nabla v^{(1)}$.}}\label{fig4}}
\end{figure} More precisely we have
\begin{eqnarray*}
&&\nabla v^{(1)}(\cdot)=W^\prime_1\quad\mbox{ in }\quad Q_1=]0,2h^\alpha s_1[\times ]0,2h^\alpha s_2[,\\
&&\nabla v^{(1)}(\cdot)=W^\prime_2\quad\mbox{ in }\quad Q_2=]0,2h^\alpha s_1[\times ]2h^\alpha s_2,2h^\alpha[,\\
&&\nabla v^{(1)}(\cdot)=W^\prime_3\quad\mbox{ in }\quad Q_3=]2h^\alpha s_1,2h^\alpha[\times ]2h^\alpha s_2,2h^\alpha[,\\
&&\nabla v^{(1)}(\cdot)=W^\prime_4\quad\mbox{ in }\quad Q_4=]2h^\alpha s_1,2h^\alpha[\times ]0,2h^\alpha s_2[.
\end{eqnarray*} 
Without loss of generality, we assume that $K^\prime\cap K=\emptyset$, if not we modify $v^{(1)}$ when it is necessary.

Let $k\in\NN^\ast$. Using Lemma~\ref{prop1} we replace $v^{(1)}$ by $w_k^l\in W^{1,\infty}(Q_l; \RR^2)$ on each $Q_l$ such that
\[
w_k^l=v^{(1)}\;\mbox{ on }\;\partial Q_l \;\mbox{ and }\;\left\Vert\nabla w_k^l\right\Vert_{L^\infty(Q_l;\RR^2)}\le 2\left(1+\left\Vert\nabla v^{(1)}\right\Vert_{L^\infty(Q_l;\RR^2)}\right)
\]  
for all $l\in\{1,2,3,4\}$. Define $v^{(2)}:[0,2h^\alpha]^2\to\RR^2$ by $v^{(2)}=w_k^l$ on $Q_l$ with $l\in\{1,2,3,4\}$. Extend $v^{(2)}$ by periodicity with period $2h^\alpha$ in both directions $x_1,x_2$, we obtain a function still denoted $v^{(2)}$. Consider $w_k=(w^1_k,w^2_k)={v^{(2)}}{\lfloor_\Omega}$. In order to match the boundary conditions we set \[\widetilde{w}_k(\cdot):=\left(\min\left\{w_k^1(\cdot),\dist(\cdot,\partial\Omega)\right\},\min\left\{w_k^2(\cdot),\dist(\cdot,\partial\Omega)\right\}\right).\] Finally, consider $u^\alpha_k$ the interpolant of $\widetilde{w}_k$ on $\T_h$. Since $\Vert\nabla \widetilde{w}_k\Vert_{L^\infty(\Omega;\RR^2)}\le C(1+\sup_{\xi\in \Sigma^{(2)}}\vert\xi\vert)$ for some $C>0$ which does not depend on $h$ (but possibly depends on $L$), and the mesh is regular, we deduce that $u^\alpha_k\in V_{0,C}^h$ for some $C>0$. 


\subsubsection*{Error estimation: {\it Case $\it 1$}} Let us evaluate $\Err^\Sigma(u^\alpha_k)=\vert\left\{x\in\Omega: \nabla u^\alpha_k(x)\notin\Sigma\right\}\vert$ in terms of $h$. 
  
Let $D_p^1:=\left(\{p2h^\alpha\}\times \RR\right)\cap\Omega\;\mbox{ and }\;D_p^2:=\left(\RR\times \{p2h^\alpha\}\right)\cap\Omega$ where $p\in\ZZ$. For each $i\in\{1,2\}$ the number $N_i={\rm card}\{p\in\ZZ: D_p^i\ne\emptyset\}$ of lines $x_i=ph^\alpha$ lying in $\Omega$ satisfies $(N_i-1)h^\alpha\le \delta$. By \eqref{hdelta2} it holds 
$
h\le h^\alpha\;\mbox{ and } \; N_i\le 2\delta h^{-\alpha}.
$
Therefore the number of squares $N_h$ of type $[0,2h^\alpha]^2$ lying in $\Omega$, i.e.,
\[
N_h:=\mbox{card}\left\{z\in\ZZ^2: z2h^\alpha+[0,2h^\alpha]^2\subset\Omega\right\}
\]
 satisfies 
\begin{equation}\label{number_squares}
N_h=(N_1-1)(N_2-1)\le 4\delta^2h^{-2\alpha}.
\end{equation}
Now, we have $\{x\in\widehat\Omega:\nabla u^\alpha_k(x)\notin\Sigma\}\subset V_1\cup V_2\cup V_3$ where $V_1,V_2$ and $V_3$ are given as follows. 
\begin{enumerate}[label=(\roman*)]
\item $\displaystyle
V_1:=\mathop{\cup}_{z\in\ZZ^2}\mathop{\cup}_{l\in\{1,\cdots,4\}}\left\{x\in z2h^\alpha+Q_l:\nabla w_k(x)\notin\Sigma\right\}\cap\widehat{\Omega}.
$
By Lemma~\ref{prop1} we find
\begin{equation}\label{S_3} 
\vert V_1\vert\le CN_h {h^{2\alpha}\over k}
\end{equation}
for some $C>0$. Note that $V_1$ is the set where possibly $\nabla u^\alpha_k(\cdot)\notin\Sigma$ because of the preservation of continuity of $w_k$ and thus of $u^\alpha_k$.
\item $V_2:=V_2^1\cup V_2^2$ where 
\[
V_2^1:=\mathop{\cup}_{z\in\ZZ^2} \left\{x\in z2h^\alpha+[0,2h^\alpha]^2: \dist(x,L_{s,z})<2h\right\}\cap\widehat{\Omega}
\] 
with 
\[
L_{s,z}:=\left\{(x_1,x_2)\in z2h^\alpha+[0,2h^\alpha]^2: x_i=z_i2h^\alpha\mbox{ or }x_i=z_i2h^\alpha s_i\mbox{ for some }i\in\{1,2\}\right\}
\] 
and 
\[
V_2^2:=\mathop{\cup}_{z\in\ZZ^2}\mathop{\cup}_{l\in\{1,\cdots,4\}} \left\{x\in z2h^\alpha+[0,2h^\alpha]^2: \dist(x,L_{s,z}^l)<2h\right\}\cap\widehat{\Omega}
\]
with \[L_{s,z}^l:=\mathop{\cup}_{p=0}^{k-1}\left\{(x_1,x_2)\in z2h^\alpha+Q_l: x_i=s^p_i\mbox{ or }x_i=t^p_i\mbox{ for some }i\in\{1,2\}\right\};\] 
\begin{itemize}
\item $V_2^1$ is the error introduced by the interpolation on lines $x_i=p2h^\alpha$ or $x_i=p2h^\alpha s_i$ of each squares of type $[0,2h^\alpha]^2$ where $i\in\{1,2\}$, its measure satisfies for some $C>0$
\[
\left\vert V_2^1\right\vert\le C N_h2h h^\alpha;
\]
\item $V_2^2$ is the error introduced by the interpolation on lines of each strip induced by the functions $w_k^l$, its measure satisfies for some $C>0$
\[
\left\vert V_2^2\right\vert\le C N_hk2h h^\alpha;
\]
\end{itemize}
it folllows that for some $C>0$
\begin{equation}\label{S_24} 
\left\vert V_2\right\vert\le CkN_hh^{1+\alpha}.
\end{equation}

\item\label{error-boundary-estimate} $V_3=V_3^1\cup V_3^2$ where
 \[
V_3^1:=\left\{x\in\Omega: \dist(x,\partial\Omega)<\Vert w_k\Vert_{L^\infty(\Omega;\RR^2)}\right\}\mbox{ and } V_3^2:=\left\{x\in\Omega:\dist(x,\Gamma_h)<2h\right\}
\]
with $\Gamma_h:=\{x\in\Omega:\dist(x,\partial\Omega)=\Vert w_k\Vert_{L^\infty(\Omega;\RR^2)}\}$ and $\Vert w_k\Vert_{L^\infty(\Omega;\RR^2)}=\max\{\Vert w_k^1\Vert_{L^\infty(\Omega)},\Vert w^2_k\Vert_{L^\infty(\Omega)}\}$; similarly to the estimation in the case of one lamination we find for some $C>0$
\begin{equation}\label{S_1}
\vert V_3\vert \le C h^\alpha.
\end{equation}

\end{enumerate}
Therefore collecting (\ref{S_3}), (\ref{S_24}) and (\ref{S_1}), we obtain by taking \eqref{number_squares} into account 
\[
\Err^\Sigma(u^\alpha_k) \le\sum_{i=1}^3\vert V_i\vert\le C\left(h^\alpha+h^{-2\alpha}\left({h^{2\alpha}\over k}+kh^{1+\alpha}\right)\right)=C\left(h^\alpha+{1\over k}+k{h^{1-\alpha}}\right).
\]
The function $x\mapsto \sigma(x):={1\over x}+x{h^{1-\alpha}}$ is minimum at ${x_1}=h^{-{1-\alpha\over 2}} $. If $x_1$ is not an integer then take $\widehat{k}:=\ceil [\big]{x_1}+1$, where $\ceil [\big]{\cdot}$ denotes the integer part. It is not difficult to see that for some $C>0$
\[
\inf_{k\ge 1}\left(h^\alpha+{1\over k}+k{h^{1-\alpha}}\right)\le C\left(h^\alpha+h^{{1-\alpha\over 2}}\right).
\]
Indeed, we have
\begin{align*}
\inf_{k\ge 1}\sigma(k)\le \sigma\left(\widehat{k}+1\right)\le h^\alpha+{h^{\frac{1-\alpha}{2}}}+{h^{\frac{1-\alpha}{2}}}+h^{1-\alpha}\le 3\left( h^\alpha+{h^{\frac{1-\alpha}{2}}}\right)
\end{align*}
since $h<1$ and $\alpha<1$.

The function $\alpha\mapsto h^\alpha+h^{{1-\alpha\over 2}}$ is minimum for $\widehat{\alpha}:={1\over 3}$. Set $\widehat{u}:=u^{\widehat\alpha}_{\widehat k}$. It follows that for some $C>0$
\[
\Errm^K\le \Errm^\Sigma\le\Err^\Sigma(\widehat u)\le C h^{1\over 3},
\]
which finishes the proof for {\it Case 1}.
\begin{remark} If we choose differents $k_l$ (in place of $k$) on each $Q_l$ with $l\in\{1,2,3,4\}$ for the number of ``laminations" then the estimate for $\Errm^K$ keeps unchanged. 
\end{remark}

\subsubsection*{Construction: Case $\it 2$} Let $\alpha\in ]0,1[$ and $h>0$ be such that 
\begin{equation}\label{hdelta21}
h<1\hbox{ and }h^\alpha\le \delta.
\end{equation}
Let $v^{(1)}:[0,h^\alpha]\times \RR\to \RR^2$ be defined
by $v^{(1)}(x):=(v_1(x),0)$, where 
\[
v_1(x)=\left\{
\begin{array}{ll} a_1x_1 & \mbox{ if
    }x_1\in [0,\lambda h^\alpha]   \\ \\
a_2(x_1-h^\alpha)  & \mbox{ if
    }x_1\in ]\lambda h^\alpha,h^\alpha].   
\end{array}
\right.\]
It is clear that $\nabla v^{(1)}(x)\in \{W_1,\widetilde{F}_1^2\}$ a.e. in $]0,h^\alpha[\times \RR$. 

Let $k\in\NN^\ast$. Consider $\varphi_k:[0,h^\alpha]\times \RR\to \RR$ defined by   
\[
\varphi_k(x)=\left\{
\begin{array}{ll} \nu_k(x_2)& \mbox{ if
    }(x_1,x_2)\in [\lambda h^\alpha,h^\alpha]\times \RR^+  \\ \\
-\nu_k(-x_2)& \mbox{ if
    }(x_1,x_2)\in [\lambda h^\alpha,h^\alpha]\times \RR^-\\ \\
0&\mbox{ if } (x_1,x_2)\in [0,\lambda h^\alpha]\times \RR  
\end{array}
\right.
\]
where 
\[
\RR^+\ni x_2\mapsto \nu_k (x_2)=\int_{0}^{x_2}\left((b_2-b_3)\chi_\mu\left({k\over h^\alpha}t\right)+b_3\right)dt,
\]
with $\chi_\mu$ is defined by \eqref{chi-mu}. 
Define $v_2:[0,h^\alpha]\times \RR \to\RR$ by 
\[
v_2(x):=\min\{\varphi_k(x),\dist(x,L_h)\}
\]
where $L_h=\{x\in [0,h^\alpha]\times \RR: x_1=\lambda h^\alpha\mbox{ and }x_1=h^\alpha\}$. Set $v^{(2)}:=(v_1,v_2)$. We extend it by periodicity in the direction $x_1$ of period $h^\alpha$, and we obtain a function still denoted $v^{(2)}$. Then consider the restriction to $\Omega$, i.e., $w_k:=(w^1_k,w^2_k)={v^{(2)}}{\lfloor_\Omega}$. In order to match the boundary conditions, we set
\[
\widetilde{w}_k(\cdot):=\left(\min\left\{w^1_k(\cdot),\dist(\cdot,\partial\Omega)\right\},\min\left\{w^2_k(\cdot),\dist(\cdot,\partial\Omega)\right\}\right).
\] 
Finally, consider the interpolant $u^\alpha_k$ of $\widetilde{w}_k$ on $\T_h$. Similarly to {\it Case 1}, $u^\alpha_k\in V_{0,C}^h$ for some $C>0$.

\subsubsection*{Error estimation: Case $\it 2$} It remains to evaluate $\Err^\Sigma(u^\alpha_k)$ in terms of $h$. We have $\{x\in\widehat{\Omega}:\nabla u^\alpha_k(x)\notin \Sigma\}\subset V_1\cup V_2\cup V_3$ where $V_1,V_2$ and $V_3$ are given as follows.
\begin{enumerate}[label=(\roman*)]
\item A neighborhood $V_1$ of $L_h$ defined by
\[
V_1:=\left\{x\in\Omega:\dist(x, zh^\alpha+L_h)<\Vert w_k\Vert_{L^\infty(\Omega;\RR^2)} \mbox{ for some }z\in\ZZ\right\}
\] where possibly $\nabla u^\alpha_k(\cdot)\notin \Sigma$. It satisfies for some $C>0$
\begin{equation}\label{s_4}
\vert V_1\vert\le C {h^{\alpha}\over k}h^{-\alpha}=C{1\over k}.
\end{equation}
\item An error is introduced by the interpolation which consists in a neighborhood $V_2$
\begin{itemize} 
\item of lines $x_1=ph^\alpha$ or $x_1=p\lambda h^\alpha$ lying in $\Omega$, with $p\in\ZZ$, its measure is bounded by
$C h^{1-\alpha}$ for some $C>0$;
\item of lines $x_2=ph^\alpha$ and $(p+\mu)h^\alpha$ lying in $\Omega$, with $p\in\ZZ$, its measure is bounded by
$C {h}kh^{-\alpha}=C{k}h^{1-\alpha}$ for some $C>0$;
\end{itemize}
it follows that for some $C>0$
\begin{equation}\label{V1case2}
\vert V_2\vert\le C{k}h^{1-\alpha}.
\end{equation}
\item Similarly to {\it Case 1}, $V_3$ is a neighborhood of $\partial\Omega$ satisfying for some $C>0$
\begin{equation}\label{s_1}
\left\vert V_3\right\vert\le C h^\alpha.
\end{equation}
\end{enumerate}
Collecting the bounds \eqref{s_4}, \eqref{V1case2} and \eqref{s_1}, we deduce for some $C>0$
\[
\Err^\Sigma(u^\alpha_k)\le\sum_{i=1}^3 \vert V_i\vert\le C\left(h^{\alpha}+{1\over k}+kh^{1-\alpha}\right),
\]
the end of the proof follows the one of {\it Case $\it1$}. 
\hfill$\blacksquare$
%
%
\section{Proof of Theorem~\ref{main result}: $L\ge 2$}
\subsection{Geometry: reduction from compact to finite case} We start by reduction results from compact to finite case. Roughly, we show that when  $L$ is finite then two configurations can occur. The first one is that there are four matrices in $K^{(L-1)}$ which are the vertices of a rectangle whose convex hull contains $0\in\MM^{2\times 2}_{\rm d}$ (see Figure~\ref{fig1}.). The second configuration consists in three matrices which are the vertices of a triangle whose convex envelope contains $0$, with one of its vertex belongs to $K$ and placed on one of the axis, and the two others vertices belong to $K^{(L-2)}$ (see Figure~\ref{fig2}.).

\begin{figure}[H]
\centering
   \begin{minipage}[l]{.46\linewidth}
     \scalebox{0.4} 
{\fontsize{0.5cm}{0cm}\selectfont
\begin{pspicture}(0,-5.8)(13,5.8)
\psdots[dotsize=0.14](4,3)
\psdots[dotsize=0.14](10,3)
\psdots[dotsize=0.14](4,-3)
\psdots[dotsize=0.14](10,-3)
\psline[linewidth=0.04cm,arrowsize=0.2cm 2.0,arrowlength=1.4,arrowinset=0.4]{<-}(11,0)(3,0)
\psline[linewidth=0.04cm,arrowsize=0.2cm 2.0,arrowlength=1.4,arrowinset=0.4]{<-}(7,4)(7,-4)
\psline[linewidth=0.04cm,linestyle=dashed,dash=0.16cm 0.16cm](4,3)(10,3)
\psline[linewidth=0.04cm,linestyle=dashed,dash=0.16cm 0.16cm](10,3)(10,-3)
\psline[linewidth=0.04cm,linestyle=dashed,dash=0.16cm 0.16cm](4,-3)(10,-3)
\psline[linewidth=0.04cm,linestyle=dashed,dash=0.16cm 0.16cm](4,3)(4,-3)
\rput{-50.0}(2.5,5.5){\psellipse[linewidth=0.04,linestyle=dotted,dotsep=0.06cm,dimen=outer](7.16,0)(5.3,4.8)}
\usefont{T1}{ptm}{m}{n}
\rput(5.8,4.4){{\bf\huge {\it {K}}}}
\usefont{T1}{ptm}{m}{n}
\rput(7.7,3.6){$x_{22}$}
\usefont{T1}{ptm}{m}{n}
\rput(10.7,-0.41){$x_{11}$}
\usefont{T1}{ptm}{m}{n}
\rput(10.7,3){$W^\prime_{1}$}
\usefont{T1}{ptm}{m}{n}
\rput(10.7,-3){$W^\prime_{2}$}
\usefont{T1}{ptm}{m}{n}
\rput(3.3,-3){$W^\prime_{3}$}
\usefont{T1}{ptm}{m}{n}
\rput(3.3,3){$W^\prime_{4}$}
\rput(7.45,-0.41){$O$}
\usefont{T1}{ptm}{m}{n}
\end{pspicture} 
}
\caption{\small{The~rectangle~case.}\label{fig1}}
   \end{minipage} 
   \begin{minipage}[r]{.46\linewidth}
    \scalebox{0.4} 
{\fontsize{0.5cm}{0cm}\selectfont
\begin{pspicture}(0,-5.8)(13,5.8)
\psdots[dotsize=0.14](4,3)
\psdots[dotsize=0.14](4,-3)
\psdots[dotsize=0.14](10,0)
\psdots[dotsize=0.14](4,0)
\psline[linewidth=0.04cm,arrowsize=0.2cm 2.0,arrowlength=1.4,arrowinset=0.4]{<-}(11,0)(3,0)
\psline[linewidth=0.04cm,arrowsize=0.2cm 2.0,arrowlength=1.4,arrowinset=0.4]{<-}(7,4)(7,-4)
\psline[linewidth=0.04cm,linestyle=dashed,dash=0.16cm 0.16cm](4,3)(4,-3)
\rput{-50.0}(2.5,5.5){\psellipse[linewidth=0.04,linestyle=dotted,dotsep=0.06cm,dimen=outer](7.16,0)(5.3,4.7)}
\usefont{T1}{ptm}{m}{n}
\rput(5.8,4.4){{\bf\huge {\it {K}}}}
\usefont{T1}{ptm}{m}{n}
\rput(10.2,0.4){$W_0$}
\usefont{T1}{ptm}{m}{n}
\rput(7.7,3.6){$x_{22}$}
\usefont{T1}{ptm}{m}{n}
\rput(3.3,3){$W^\prime_2$}
\usefont{T1}{ptm}{m}{n}
\rput(10.7,-0.41){$x_{11}$}
\usefont{T1}{ptm}{m}{n}
\rput(3.3,0.4){$W^\prime_1$}
\usefont{T1}{ptm}{m}{n}
\rput(3.3,-3){$W^\prime_{3}$}
\usefont{T1}{ptm}{m}{n}
\rput(7.45,-0.41){$O$}
\psline[linewidth=0.04cm,linestyle=dashed,dash=0.16cm 0.16cm](4,3)(10,0)
\psline[linewidth=0.04cm,linestyle=dashed,dash=0.16cm 0.16cm](4,-3)(10,0)
\end{pspicture} 
}
  \caption{\small{The~triangle~case.}\label{fig2}}
   \end{minipage}
\end{figure}
\begin{theorem}\label{choicecorollary} Assume that $2\le L<+\infty$. Then there exists $\Sigma\subset K$ such that 
\[
\mbox{\rm card}(\Sigma)\le 2^{L+1}\quad\mbox{ and }\quad 0\in \Sigma^{(L)},
\] 
and one of the two assertions holds:
\begin{enumerate}
\item[$\Rect$] there exists $\displaystyle K^\prime=\{W_1^\prime, W_2^\prime,W_3^\prime,W_4^\prime\}\subset\Sigma^{(L-1)}$ such that $0\in {\rm co}(K^\prime)$ the convex envelope of $K^\prime$ and 
\begin{gather*}
W_1^\prime=\begin{pmatrix}a_1&0\\0&b_1\end{pmatrix}\quad  
W_2^\prime=\begin{pmatrix}a_1&0\\0&b_2\end{pmatrix}\quad   
W_3^\prime=\begin{pmatrix}a_2&0\\0&b_2\end{pmatrix}\quad  
W_4^\prime=\begin{pmatrix}a_2&0\\0&b_1\end{pmatrix},
\end{gather*}
with $a_1,b_1\ge 0$, $a_2,b_2\le 0$, satisfying $a_1-a_2>0$ and $b_1-b_2>0$;\\

\item[$\Trian$] there exist $W_0\in K$, $W_1^\prime\in \Sigma^{(L-1)}\setminus \Sigma^{(L-2)}$, $W_2^\prime\in \Sigma^{(L-2)}$ and $W_3^\prime\in \Sigma^{(L-2)}$ such that for some $l\in\{1,2\}$
\begin{eqnarray*}
&& 0\in[W_1^\prime, W_0],\;\;\Rk(W_0-W_1^\prime)=1\;\mbox{ and }W_0-W_1^\prime\in\Vect(E_l);\\
&& W_1^\prime\in [W_2^\prime,W_3^\prime],\;\;\Rk(W_2^\prime-W_3^\prime)=1\;\mbox{ and }W_2-W_3^\prime\in\Vect(E_{3-l}).
\end{eqnarray*}
\end{enumerate}
\end{theorem}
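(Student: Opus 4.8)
The plan is to cut the problem down from the compact hull $K^{(L)}$ to a one-dimensional ``rank-one segment through $0$'' inside $K^{(L-1)}$, to extend that segment maximally, and then to strip off one lamination at each of its two endpoints; the shape of what the stripping lands on — a point of $K$ versus a genuine level-$(L-1)$ point — is what separates $\Trian$ from $\Rect$. Throughout we may assume $L$ minimal, i.e. $0\notin K^{(L-1)}$.

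\textbf{Step 1 (maximal $0$-segment).} Since $0\in K^{(L)}\setminus K^{(L-1)}$ there are $\widetilde F,\widetilde G\in K^{(L-1)}$ with $0\in[\widetilde F,\widetilde G]$ and $\Rk(\widetilde F-\widetilde G)=1$ (equality, otherwise $0=\widetilde F\in K^{(L-1)}$), say $\widetilde F-\widetilde G\in\Vect(E_l)$. As $0$ lies on $\widetilde F+\Vect(E_l)$ and $\widetilde F-\widetilde G$ is parallel to $E_l$, that line is the $E_l$-axis, so $\widetilde F,\widetilde G$ lie on it. By Lemma~\ref{compactness} the set $K^{(L-1)}$ is compact, so Lemma~\ref{greatest-interval} and Remark~\ref{greatest-interval-remark} furnish the greatest interval $[F,G]\supseteq[\widetilde F,\widetilde G]$ with $\{F,G\}\subseteq K^{(L-1)}$, still on the $E_l$-axis and still containing $0$. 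Up to the obvious symmetries we take $l=1$; then $F$, $G$ have zero second coordinate and first coordinates $a_1$, $a_2$, and we may arrange $a_1>0>a_2$ (neither can vanish, else $0\in K^{(L-1)}$) together with $F\notin K^{(L-2)}$ — the two endpoints cannot both lie in $K^{(L-2)}$, since $0\notin(K^{(L-2)})^{(1)}=K^{(L-1)}$.

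\textbf{Step 2 (stripping one lamination off each endpoint).} Write $F=\lambda P+(1-\lambda)Q$ with $P,Q\in K^{(L-2)}$, $\Rk(P-Q)=1$; since $F\notin K^{(L-2)}$, $\lambda\in(0,1)$ and $P\neq Q$. The pivotal observation is that maximality of $[F,G]$ forces $P-Q\in\Vect(E_2)$: if instead $P-Q\in\Vect(E_1)$, then $[P,Q]$ lies on the $E_1$-axis, hence $[P,Q]\subseteq[F,G]$ by maximality, so the endpoint $F$ would lie in $\{P,Q\}\subseteq K^{(L-2)}$, impossible. Hence $P,Q$ have first coordinate $a_1$ and second coordinates $p_2>0>q_2$. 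Now two cases. \emph{If $G\in K$}, put $W_0:=G$, $W_1':=F$, $W_2':=P$, $W_3':=Q$; then $0\in[W_1',W_0]$ with $W_0-W_1'\in\Vect(E_1)$ of rank $1$, and $W_1'\in[W_2',W_3']$ with $W_2'-W_3'\in\Vect(E_2)$ of rank $1$ — this is $\Trian$. \emph{If $G\notin K$}, let $m\in\{1,\dots,L-1\}$ be the level of $G$, so $G=\mu R+(1-\mu)S$ with $R,S\in K^{(m-1)}\subseteq K^{(L-2)}$, $\Rk(R-S)=1$, and the same maximality argument gives $R-S\in\Vect(E_2)$: $R,S$ have first coordinate $a_2$ and second coordinates $r_2>0>s_2$. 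Project $P,Q$ parallel to $\Vect(E_1)$ onto $\{a_2\}\times\RR$, obtaining $\widehat P,\widehat Q$ with first coordinate $a_2$ and second coordinates $p_2,q_2$; after interchanging $\{P,Q\}$ and $\{R,S\}$ if necessary — and handling the ``exactly one common endpoint'' sub-case by a mixed quadruple, exactly as in the case $L=2$, Case~1 — we may assume $[\widehat P,\widehat Q]\subseteq[R,S]$, whence $\widehat P,\widehat Q\in[R,S]\subseteq K^{(m)}$. Then $K':=\{P,Q,\widehat Q,\widehat P\}$, relabelled $\{W_1',W_2',W_3',W_4'\}$, is a rectangle with $b_1:=p_2\ge 0$, $b_2:=q_2\le 0$, $a_1\ge 0\ge a_2$, $a_1-a_2>0$, $b_1-b_2>0$ and $0\in{\rm co}(K')$ — this is $\Rect$.

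\textbf{Step 3 (finite set, level count, main obstacle).} Unwinding the recursion — the pointwise content of Propositions~\ref{choice} and~\ref{prop0}: any element of $K^{(i)}$ is a binary combination tree of depth $\le i$ with leaves in $K$, hence lies in $\Lambda^{(i)}$ for some $\Lambda\subseteq K$ with ${\rm card}(\Lambda)\le 2^i$ — let $\Sigma\subseteq K$ be the union of the leaf-sets of the points used: $\{P,Q,G\}$ in the triangle case, $\{P,Q,R,S\}$ in the rectangle case. Then ${\rm card}(\Sigma)\le 1+2\cdot 2^{L-2}$ or ${\rm card}(\Sigma)\le 2\cdot 2^{L-2}+2\cdot 2^{m-1}\le 2^{L}$, both $\le 2^{L+1}$. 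Using $(\Sigma^{(i)})^{(j)}=\Sigma^{(i+j)}$ and $\Sigma^{(i)}\subseteq K^{(i)}$: in the triangle case $P,Q\in\Sigma^{(L-2)}$, so $W_1'=F\in[P,Q]\subseteq\Sigma^{(L-1)}$ while $W_1'\notin K^{(L-2)}\supseteq\Sigma^{(L-2)}$, and $W_0=G\in K\subseteq\Sigma^{(L-1)}$, giving $0\in[W_1',W_0]\subseteq\Sigma^{(L)}$; in the rectangle case $P,Q\in\Sigma^{(L-2)}$ and $R,S\in\Sigma^{(m-1)}$, so $F\in[P,Q]\subseteq\Sigma^{(L-1)}$ and $G\in[R,S]\subseteq\Sigma^{(m)}\subseteq\Sigma^{(L-1)}$, giving $0\in[F,G]\subseteq\Sigma^{(L)}$, while $W_1',W_2'\in\Sigma^{(L-2)}$ and $W_3',W_4'\in\Sigma^{(m)}$, so $K'\subseteq\Sigma^{(L-1)}$. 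The hard part will be Step~2: the maximality argument pinning the stripped lamination to the transverse direction $E_2$, and the projection/interchange bookkeeping that fuses the two a priori unrelated transverse segments $[P,Q]$, $[R,S]$ into an honest rectangle with all four vertices in the correct lamination hull. A secondary subtlety worth anticipating is the level count: a rectangle is filled only at its \emph{second} lamination, which would naively give $0\in\Sigma^{(L+1)}$; the one-level gain comes precisely from the fact that $0$ sits on the axis, so the two ``axis midpoints'' (first coordinates $a_1$, $a_2$, second coordinate $0$) are reached after only $L-1$ laminations, and $0$ after one further step.
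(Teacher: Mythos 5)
Your proof is correct and follows essentially the same route as the paper's: maximally extend the rank-one segment through $0$ inside $K^{(L-1)}$ (via Lemma~\ref{compactness} and Remark~\ref{greatest-interval-remark}), dichotomize on whether the second endpoint already lies in $K$, peel one lamination off each endpoint and use maximality of the segment to force the transverse direction $E_{3-l}$, fuse the two transverse segments into a rectangle by projection (handling the partial-overlap sub-case by a mixed quadruple exactly as in $L=2$, Case~1), and collect the binary-tree leaves to form $\Sigma$ with $\mathrm{card}(\Sigma)\le 2^{L+1}$. The paper packages the same chain as Lemma~\ref{choice} (the $\Rect$/$\Trian$ dichotomy and the rectangle/triangle configurations) followed by Lemma~\ref{prop0} (the orthogonal-recursion step), which is precisely the ``unwinding'' you invoke in Step~3 for the cardinality bound.
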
                         
\begin{proof}  
The proof is a consequence of Lemma~\ref{choice} and Lemma~\ref{prop0} below.
\end{proof}                   
\begin{lemma}\label{choice} Assume that $2\le L<+\infty$. Then one of the two assertions holds:
\begin{enumerate}
\item[$\Rect$] there
  exists $\displaystyle K^\prime=\{W_1^\prime, W_2^\prime,W_3^\prime,W_4^\prime\}\subset
  K^{(L-1)}$ such that $0\in {\rm co}(K^\prime)$ and 
\begin{gather*}
W_1^\prime=\begin{pmatrix}a_1&0\\0&b_1\end{pmatrix}\quad  
W_2^\prime=\begin{pmatrix}a_1&0\\0&b_2\end{pmatrix}\quad   
W_3^\prime=\begin{pmatrix}a_2&0\\0&b_2\end{pmatrix}\quad  
W_4^\prime=\begin{pmatrix}a_2&0\\0&b_1\end{pmatrix},
\end{gather*}
with $a_1,b_1\ge 0$, $a_2,b_2\le 0$, satisfying $a_1-a_2>0$ and
$b_1-b_2>0$;\\

\item[$\Trian$] there exist $W_0\in K$, $W_1^\prime\in K^{(L-1)}\setminus K^{(L-2)}$, $W_2^\prime\in K^{(L-2)}$ and $W_3^\prime\in K^{(L-2)}$ such that for some $l\in\{1,2\}$
\begin{eqnarray*}
&& 0\in[W_1^\prime, W_0],\;\;\Rk(W_0-W_1^\prime)=1\;\mbox{ and }W_0-W_1^\prime\in\Vect(E_l);\\
&& W_1^\prime\in [W_2^\prime,W_3^\prime],\;\;\Rk(W_2^\prime-W_3^\prime)=1\;\mbox{ and }W_2-W_3^\prime\in\Vect(E_{3-l}).
\end{eqnarray*}
\end{enumerate}
\end{lemma}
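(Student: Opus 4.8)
The plan is to peel off one lamination from $0\in K^{(L)}$, enlarge the resulting rank-one segment through the origin to a maximal one inside $K^{(L-1)}$, reduce everything to the plane picture on the line carrying that segment, and then split according to whether an endpoint of the segment already lies in $K$. Concretely: since $L\ge 2$ and $L=\inf\{i:0\in K^{(i)}\}$ we have $0\notin K^{(L-1)}$, so by definition of $K^{(L)}$ there are $\widetilde F^1,\widetilde F^2\in K^{(L-1)}$ and $\lambda\in\,]0,1[$ with $0=\lambda\widetilde F^1+(1-\lambda)\widetilde F^2$ and $\Rk(\widetilde F^1-\widetilde F^2)=1$ (the rank is not $0$, else $0\in K^{(L-1)}$); say $\widetilde F^1-\widetilde F^2\in\Vect(E_l)$. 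By Lemma~\ref{compactness} the set $K^{(L-1)}$ is compact, so Remark~\ref{greatest-interval-remark} gives the greatest interval $[F^1,F^2]\subset\widetilde F^1+\Vect(E_l)$ with $\{F^1,F^2\}\subset K^{(L-1)}$ and $[\widetilde F^1,\widetilde F^2]\subset[F^1,F^2]$, hence $0\in[F^1,F^2]$. Up to relabelling the two diagonal entries (a symmetry preserving both conclusions) we take $l=1$; then, under $\MM^{2\times 2}_{\rm d}\cong\RR^2$, the segment $[F^1,F^2]$ is horizontal and passes through $0$, so $F^1=(a_1,0)$ and $F^2=(a_2,0)$, and since $0\notin\{F^1,F^2\}$ the origin is interior to $[F^1,F^2]$; relabelling $F^1\leftrightarrow F^2$ if needed, $a_1>0>a_2$.

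\textbf{Key descent step.} The point I would isolate as a sublemma is: \emph{if $F\in\{F^1,F^2\}$ and $F\notin K$, then there exist $R,S\in K^{(L-2)}$ with $F$ interior to $[R,S]$ and $R-S\in\Vect(E_2)$.} Indeed, let $m$ be least with $F\in K^{(m)}$; then $1\le m\le L-1$ (as $F\in K^{(L-1)}\setminus K$) and $F\notin K^{(m-1)}$, so $F=\mu R+(1-\mu)S$ with $\mu\in\,]0,1[$, $R,S\in K^{(m-1)}\subset K^{(L-2)}$, $\Rk(R-S)=1$, and $F$ interior to $[R,S]$ because $F\notin\{R,S\}$. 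If $R-S\in\Vect(E_1)$, then $[R,S]$ lies on the same horizontal line as $[F^1,F^2]$ and $R,S\in K^{(L-1)}$, so maximality of $[F^1,F^2]$ forces $[R,S]\subset[F^1,F^2]$; but then the endpoint $F$ of $[F^1,F^2]$ would be interior to the sub-interval $[R,S]$, which is impossible. Hence $R-S\in\Vect(E_2)$.

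\textbf{The dichotomy.} If $F^1\in K$ (if rather $F^2\in K$, swap $F^1\leftrightarrow F^2$), put $W_0:=F^1$. Then $F^2\notin K^{(L-2)}$: otherwise both $F^1,F^2$ lie in $K^{(L-2)}$ and $0\in[F^1,F^2]\subset(K^{(L-2)})^{(1)}=K^{(L-1)}$, a contradiction. So $W_1':=F^2\in K^{(L-1)}\setminus K^{(L-2)}$, in particular $F^2\notin K$, and the descent step furnishes $W_2',W_3'\in K^{(L-2)}$ with $W_1'$ interior to $[W_2',W_3']$ and $W_2'-W_3'\in\Vect(E_2)$; together with $0\in[W_1',W_0]$ and $W_0-W_1'\in\Vect(E_1)$ this is case $\Trian$ with $l=1$. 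If instead $F^1\notin K$ and $F^2\notin K$, apply the descent step to each endpoint: $R_i=(a_i,\beta_i)$, $S_i=(a_i,\beta_i')$ in $K^{(L-2)}$ with $\beta_i<0<\beta_i'$ after relabelling, $i=1,2$. Put $b_1:=\min(\beta_1',\beta_2')>0$, $b_2:=\max(\beta_1,\beta_2)<0$, and $W_1':=(a_1,b_1)$, $W_2':=(a_1,b_2)$, $W_3':=(a_2,b_2)$, $W_4':=(a_2,b_1)$; each lies on one of the vertical segments $[R_i,S_i]$, hence in $K^{(L-1)}$, the sign conditions $a_1,b_1\ge 0$, $a_2,b_2\le 0$, $a_1-a_2>0$, $b_1-b_2>0$ hold, and $0\in{\rm co}\{W_1',W_2',W_3',W_4'\}$ since $a_2<0<a_1$ and $b_2<0<b_1$. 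This is case $\Rect$.

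\textbf{Main obstacle.} I expect the delicate point to be the descent step, and inside it the claim that $R-S$ cannot be parallel to $E_1$: this is the only place where maximality of $[F^1,F^2]$ is genuinely used, through the simple but essential observation that an endpoint of an interval is never interior to one of its sub-intervals. The remaining ingredients — monotonicity $K\subset K^{(m-1)}\subset K^{(L-2)}\subset K^{(L-1)}$ of the lamination hulls, and the elementary geometry of an axis-parallel rectangle surrounding the origin — are routine.
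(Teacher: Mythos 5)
Your proof is correct and follows essentially the same route as the paper's: peel off one lamination, enlarge to a maximal rank-one segment $[F^1,F^2]\subset K^{(L-1)}$ through $0$, use maximality of that segment to force the next decomposition into the orthogonal direction $\Vect(E_2)$, and then split the rectangle/triangle cases. Your organisation is a little cleaner in two places — the dichotomy is stated directly as $\{F^1,F^2\}\cap K=\emptyset$ versus not (the paper phrases an intermediate claim $\Rect'/\Trian'$ in terms of the levels $K^{(i)}\setminus K^{(i-1)}$, which amounts to the same thing), and the rectangle is built by the $\min/\max$ of the two vertical segments rather than by the paper's projection argument with its two sub-cases — but the key lemma (maximality excludes a collinear descent) and the overall structure are the same.
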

\begin{proof} Assume that $2\le L<+\infty$. Then there exists $\{\widetilde{F}_1^1,\widetilde{F}_1^2\}\subset K^{(L-1)}\setminus K^{(L-2)}$ such that
\[
0\in [\widetilde{F}_1^1,\widetilde{F}_1^2] \;\mbox{ and }\;\Rk(\widetilde{F}_1^1-\widetilde{F}_1^2)= 1. 
\]
By Lemma~\ref{compactness} together with Remark~\ref{greatest-interval-remark} we consider the greatest interval $[F_1^1,F_1^2]$ containing
$[\widetilde{F}_1^1,\widetilde{F}_1^2]$ such that $\{F_1^1,F_1^2\}\subset
K^{(L-1)}$. It is clear that
\[
0\in [F_1^1,F_1^2] \;\mbox{ and }\;\Rk(F_1^1-F_1^2)= 1. 
\]
We claim that one of the two possibilities holds:
\begin{enumerate}
\item[$\Rect^\prime$]\label{C1} there exists $i\in\{1,\dots,L-1\}$ such that either $F_1^1\in K^{(i)}\setminus K^{(i-1)}$ and $F_1^2\in K^{(L-1)}\setminus K^{(L-2)}$, or $F_1^2\in K^{(i)}\setminus K^{(i-1)}$ and $F_1^1\in K^{(L-1)}\setminus K^{(L-2)}$;\\

\item[$\Trian^\prime$]\label{C2} either $F_1^1\in K$ and $F_1^2\in K^{(L-1)}\setminus K^{(L-2)}$, or $F_1^2\in K$ and $F_1^1\in K^{(L-1)}\setminus K^{(L-2)}$.
\end{enumerate}

\bigskip

Indeed, we have one of the two assertions:
\begin{equation}\label{condition2}
\{F_1^1,F_1^2\}\not\subset K^{(L-1)}\setminus K^{(L-2)};
\end{equation}
\begin{equation}\label{condition1}
\{F_1^1,F_1^2\}\subset K^{(L-1)}\setminus K^{(L-2)}.
\end{equation}
Assume that (\ref{condition2}) holds. Note that we cannot have $\{F_1^1,F_1^2\}\subset K^{(L-2)}$ since $L$ is minimal. Thus, we have either $F_1^1\in K^{(L-1)}\setminus K^{(L-2)}$ and $F_1^2\in K^{(L-2)}$, or $F_1^2\in K^{(L-1)}\setminus K^{(L-2)}$ and $F_1^1\in K^{(L-2)}$. Without loss of generality, assume that $F_1^1\in K^{(L-1)}\setminus K^{(L-2)}$ and $F_1^2\in K^{(L-2)}$. Since 
\[
K^{(L-2)}=\bigcup_{i=0}^{L-2} K^{(i)}\;\mbox{ and }\; K^{(i)}\subset K^{(i+1)}\;\mbox{ for all }i\in\{0,\dots, L-1\},
\]
it follows that either there exists $i\in\{1,\dots,L-2\}$ such that $F_1^2\in K^{(i)}\setminus K^{(i-1)}$, which is, on account of (\ref{condition2}), the claim $\Rect^\prime$. Or for every $i\in\{1,\dots,L-2\}$, we have $F_1^2\notin K^{(i)}\setminus K^{(i-1)}$, i.e., $F_1^2\in K$, which is the claim $\Trian^\prime$.

\medskip

{\em To prove $\Rect$}. Assume that $\Rect^\prime$ holds. We have $[F_1^1,F_1^2]\subset
F_1^1+\Vect(E_l)$ for some $l\in\{1,2\}$. Without loss of generality, assume that there exists $i\in\{1,\dots,L-1\}$ such that $F_1^1\in K^{(i)}\setminus K^{(i-1)}$ and $F_1^2\in K^{(L-1)}\setminus K^{(L-2)}$. There exists $\{G_2^{1},G_2^{2}\}\subset K^{(i-1)}$ such that
$F_1^1\in [G_2^{1},G_2^{2}]$ and $\Rk(G_2^{1}-G_2^{2})\le
1$. We must have $\Rk(G_2^{1}-G_2^{2})=1$, otherwise we would have
$G_2^{1}=G_2^{2}=F_1^j\notin K^{(i-1)}$, a contradiction. Assume
that $G_2^{1}\in G_2^{2}+\Vect(E_l)$. Hence there exists
$A\in\{G_2^{1},G_2^{2}\}$ such that $[A,F_1^{2}]\supset
[F_1^1,F_1^2]$. By the fact that $[F_1^1,F_1^2]$ is maximal together with $A\in
K^{(i-1)}\subset K^{(L-1)}$, we obtain $A=F_1^1\notin K^{(i-1)}$, which is
impossible. Therefore $G_2^{2}\in G_2^{1}+\Vect(E_{3-l})$. The same reasoning applies to $F_1^2$, and we obtain $\{G_2^{3},G_2^{4}\}\subset K^{(L-2)}$ such that
$F_1^2\in [G_2^{3},G_2^{4}]$, $\Rk(G_2^{3}-G_2^{4})=1$ and $G_2^{4}\in G_2^{3}+\Vect(E_{3-l})$.

Thus
$\Delta_1=G_2^1+\Vect(E_{3-l})$ is parallel to
$\Delta_2=G_2^3+\Vect(E_{3-l})$. Consider the orthogonal projection to
$\Vect(E_{3-l})$ of $G_2^1,G_2^2$ on $\Delta_2$ (resp. of
$G_2^3,G_2^4$ on $\Delta_1$) which we call
$\widetilde{G}_2^3,\widetilde{G}_2^4$
(resp. $\widetilde{G}_2^1,\widetilde{G}_2^2$).

Assume that 
\[
[\widetilde{G}_2^3,\widetilde{G}_2^4]\subset [G_2^3,G_2^4] \;\left(\mbox{or }\;[\widetilde{G}_2^1,\widetilde{G}_2^2]\subset[G_2^1,G_2^2]\right).
\]
Then, define
\[
K^\prime=\left\{G_2^1,G_2^2,\widetilde{G}_2^3,\widetilde{G}_2^4\right\}\left(\mbox{or }\;K^\prime=\left\{\widetilde{G}_2^1,\widetilde{G}_2^2,G_2^3,G_2^4\right\}\right).
\]
Note that, for instance, if $[\widetilde{G}_2^3,\widetilde{G}_2^4]\subset
[G_2^3,G_2^4]$ then $\widetilde{G}_2^3,\widetilde{G}_2^4\in K^{(L-1)}$. The proof follows by a suitable choice of $W_i^\prime$ (with
$i=1,\dots,4$) in $K^\prime$ satisfying the conclusion of the theorem.

Otherwise, there exists $A\in [G_2^3,G_2^4]$ (resp. $B\in
[G_2^1,G_2^2]$) such that $A$ (resp. $B$) is the projection of $G_2^1$
or $G_2^2$ (resp. of $G_2^3$ or $G_2^4$). Without loss of generality,
assume that $A$ (resp. $B$) is the projection of $G_2^1$ (resp. of
$G_2^3$). Then, Define 
\[
K^\prime=\left\{G_2^1, A, G_2^3, B\right\}.
\]
As above, the proof follows by a suitable choice of $W_i^\prime$ (with
$i=1,\dots,4$) in $K^\prime$ satisfying the conclusion of the theorem.

\medskip

{\em To prove $\Trian$}. Assume that $\Trian^\prime$ holds. Without loss of generality, we can assume that $F_1^1\in K$ and $F_1^2\in K^{(L-1)}\setminus K^{(L-2)}$. Set $W_0:=F_1^2$ and $W_1^\prime:=F_1^2$, of course we have $W_0\not=W_1^\prime$ since $L$ is minimal. There exists $\{A,B\}\subset K^{(L-2)}$ such that $W_1^\prime\in [A,B]$ and $\Rk(A-B)\le 1$. Obviously, we have $\Rk(A-B)=1$, if not $W_1^\prime\in K^{(L-2)}$ which is impossible. By Lemma \ref{compactness} together with Remark~\ref{greatest-interval-remark}, we consider the greatest interval $[W_2^\prime,W_3^\prime]$ containing $[A,B]$ such that $\{W_2^\prime,W_3^\prime\}\subset K^{(L-2)}$. It holds that $W_1^\prime\in\Vect(E_l)$ for some $l\in\{1,2\}$. Suppose that $W_2^\prime\in W_3^\prime+\Vect(E_l)$, since $[W_0,W_1^\prime]$ is maximal we deduce that $W_1^\prime\in K^{(L-2)}$ which is impossible. Thus $W_2^\prime\in W_3^\prime+\Vect(E_{3-l})$. The proof is complete.
\end{proof}

                                       
The following result allows us, starting from an element of $K^{(L-1)}$, to choose successive decompositions which are in orthogonal directions.
\begin{lemma}\label{prop0} Assume that $3\le
L<+\infty$. Let $i\in\{1,...,L-2\}$ and $j\in\{1,...,2^i\}$. Let
  $F_i^j\in K^{(L-i)}$. Assume that there exist
  $\{F_{i+1}^{2j-1},F^{2j}_{i+1}\}\subset K^{(L-i-1)}$ and $l\in\{1,2\}$ such that
$$
F_i^j\in [F_{i+1}^{2j-1},F_{i+1}^{2j}]\mbox{ and }F_{i+1}^{2j-1}-F_{i+1}^{2j}\in \Vect(E_l).
$$
Then we have four possibilities:
\begin{enumerate}[label=(\arabic*)]
\item\label{c1} there exists
$\left\{F_{i+2}^{4j-3},F^{4j-2}_{i+2}\right\}\subset
K^{(L-i-2)}$ such that 
\begin{eqnarray*}
F_{i+1}^{2j-1}\in [F_{i+2}^{4j-3},F^{4j-2}_{i+2}],\;F_{i+2}^{4j-3}-F^{4j-2}_{i+2}\in\Vect(E_{3-l})\;\mbox{ and }\Rk(F_{i+2}^{4j-3}-F^{4j-2}_{i+2})=1;
\end{eqnarray*}
\item\label{c2} there exists
$\left\{F^{4j-1}_{i+2},F^{4j}_{i+2}\right\}\subset
K^{(L-i-2)}$ such that 
\begin{eqnarray*}
F_{i+1}^{2j}\in [F_{i+2}^{4j-1},F^{4j}_{i+2}],\;F_{i+2}^{4j-1}-F^{4j}_{i+2}\in\Vect(E_{3-l})\;\mbox{ and }\;\Rk(F_{i+2}^{4j-1}-F^{4j}_{i+2})=1;
\end{eqnarray*} 
\item\label{c3} $F_{i+1}^{2j-1}\in K$;
\item\label{c4} $F_{i+1}^{2j}\in K$.
\end{enumerate}
\end{lemma}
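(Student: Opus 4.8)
The plan is a four-branch case analysis on the two matrices $F_{i+1}^{2j-1}$ and $F_{i+1}^{2j}$. For each of them one asks first whether it already lies in $K$ (this yields \ref{c3}, resp. \ref{c4}), and, failing that, whether the rank-one splitting it inherits one lamination level lower can be taken with difference in the coordinate direction $E_{3-l}$ orthogonal to the parent direction $E_l$ (this yields \ref{c1}, resp. \ref{c2}); the remaining configuration is then excluded. Throughout I use that $L-i-1\ge 1$ because $i\le L-2$, that $K^{(m-1)}\subseteq K^{(m)}$ for every $m$, and that every $K^{(m)}$ is compact by Lemma~\ref{compactness}.

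First the trivial branches: if $F_{i+1}^{2j-1}\in K$ then \ref{c3} holds and we are done, and if $F_{i+1}^{2j}\in K$ then \ref{c4} holds. So assume $F_{i+1}^{2j-1}\notin K$ and $F_{i+1}^{2j}\notin K$. Fix $G\in\{F_{i+1}^{2j-1},F_{i+1}^{2j}\}$. Since $G\in K^{(L-i-1)}$ but $G\notin K$, let $m$ be the least integer with $G\in K^{(m)}$, so $1\le m\le L-i-1$. By the definition of $K^{(m)}$ there exist $A,B\in K^{(m-1)}$ with $G\in[A,B]$ and $\Rk(A-B)\le 1$; were $A=B$ we would get $G=A\in K^{(m-1)}$, against minimality of $m$, so $\Rk(A-B)=1$ and $G$ is distinct from both endpoints, hence lies strictly between them on the line $A+\Vect(E_{l'})$ for some $l'\in\{1,2\}$. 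Since $m-1\le L-i-2$, also $\{A,B\}\subset K^{(L-i-2)}$.

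If, for $G=F_{i+1}^{2j-1}$, some such splitting has $l'=3-l$ (in particular if the minimal-level one above does), put $F_{i+2}^{4j-3},F_{i+2}^{4j-2}$ equal to its endpoints; then \ref{c1} holds. Symmetrically, if $F_{i+1}^{2j}$ admits such an orthogonal splitting into $K^{(L-i-2)}$, then \ref{c2} holds. Suppose none of \ref{c1}--\ref{c4} holds. Then the minimal-level splittings $[A,B]\ni F_{i+1}^{2j-1}$ and $[C,D]\ni F_{i+1}^{2j}$, with $\{A,B,C,D\}\subset K^{(L-i-2)}$, both have difference in $\Vect(E_l)$; together with $F_{i+1}^{2j-1}-F_{i+1}^{2j}\in\Vect(E_l)$ and $F_i^j\in[F_{i+1}^{2j-1},F_{i+1}^{2j}]$, this forces $A,B,C,D,F_{i+1}^{2j-1},F_{i+1}^{2j},F_i^j$ all onto the single line $F_i^j+\Vect(E_l)$. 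Let $P\in\{A,B\}$ be the endpoint lying on the far side of $F_{i+1}^{2j-1}$ from $F_{i+1}^{2j}$ and $Q\in\{C,D\}$ the one on the far side of $F_{i+1}^{2j}$ from $F_{i+1}^{2j-1}$; then, going along this line, the points occur in the order $P,F_{i+1}^{2j-1},F_{i+1}^{2j},Q$ with $P\ne F_{i+1}^{2j-1}$, so $[P,Q]$ properly contains $[F_{i+1}^{2j-1},F_{i+1}^{2j}]$, has $\{P,Q\}\subset K^{(L-i-2)}$ and $\Rk(P-Q)\le 1$. Hence $F_i^j\in[P,Q]\subseteq K^{(L-i-1)}$, which is impossible since $F_i^j\notin K^{(L-i-1)}$ in the inductive situation where this lemma is applied (alternatively, the larger interval $[P,Q]$ contradicts the maximality of $[F_{i+1}^{2j-1},F_{i+1}^{2j}]$ along $F_i^j+\Vect(E_l)$ in $K^{(L-i-1)}$ provided by Lemma~\ref{greatest-interval} and Remark~\ref{greatest-interval-remark}). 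Therefore this configuration cannot occur, and one of \ref{c1}--\ref{c4} holds.

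The branch bookkeeping — choosing the minimal level $m$ and reading off the splitting direction $l'$ — is routine. The delicate point, which I would write most carefully, is ruling out the configuration in which the inherited splittings of both $F_{i+1}^{2j-1}$ and $F_{i+1}^{2j}$ are parallel to the parent direction $E_l$; this is exactly where the greatest-interval lemma is used, since it makes $[F_{i+1}^{2j-1},F_{i+1}^{2j}]$ maximal along $F_i^j+\Vect(E_l)$ and thereby turns the over-long interval $[P,Q]$ into a contradiction.
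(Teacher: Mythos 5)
Your proof is correct and relies on the same two ingredients as the paper's: the greatest\mbox{-}interval maximality from Lemma~\ref{greatest-interval} and Remark~\ref{greatest-interval-remark} (reading the hypothesis, as the paper does via its tilde convention, so that $[F_{i+1}^{2j-1},F_{i+1}^{2j}]$ is the maximal interval in $K^{(L-i-1)}$ along $F_i^j+\Vect(E_l)$), and the dichotomy for each endpoint between lying in $K$ and admitting a genuine rank\mbox{-}one splitting one lamination level down, whose direction is then forced to be $E_{3-l}$. The only real difference is organizational. The paper treats $F_{i+1}^{2j-1}$ and $F_{i+1}^{2j}$ separately and in effect proves the stronger conclusion that \emph{both} ((1) or (3)) \emph{and} ((2) or (4)) hold --- which is precisely what the construction in Section~4 uses, since one must continue decomposing both endpoints --- whereas you suppose all four alternatives fail and derive one contradiction. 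Note, though, that your contradiction already works endpoint by endpoint: $P\in K^{(L-i-2)}\subset K^{(L-i-1)}$ lies on $F_i^j+\Vect(E_l)$ strictly outside $[F_{i+1}^{2j-1},F_{i+1}^{2j}]$, which contradicts maximality on its own, without ever invoking $Q$. Reorganizing this way would give you exactly the paper's sharper per\mbox{-}endpoint statement for free. Finally, your first suggested justification of the contradiction, that ``$F_i^j\notin K^{(L-i-1)}$ in the inductive situation,'' is not licensed by the lemma's hypotheses as stated; the alternative route via maximality, which you also give, is the correct one and is what the paper actually uses.
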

\begin{proof} Let $i\in\{1,...,L-2\}$ and $j\in\{1,...,2^i\}$. Let
  $F_i^j\in K^{(L-i)}$. Assume that there exist
  $\{\widetilde{F}_{i+1}^{2j-1},\widetilde{F}^{2j}_{i+1}\}\subset K^{(L-i-1)}$ and $l\in\{1,2\}$ such that
\[
F_i^j\in [\widetilde{F}_{i+1}^{2j-1},\widetilde{F}_{i+1}^{2j}]\mbox{ and }\widetilde{F}_{i+1}^{2j-1}-\widetilde{F}_{i+1}^{2j}\in \Vect(E_l).
\]
Consider the greatest
interval $[F_{i+1}^{2j-1},F_{i+1}^{2j}]$ containing
$[\widetilde{F}_{i+1}^{2j-1},\widetilde{F}_{i+1}^{2j}]$ and such that
$\{F_{i+1}^{2j-1},F_{i+1}^{2j}\}\subset K^{(L-i-1)}$. We have one of the two possibilities:
\begin{enumerate}[label=(\alph*)]
\item\label{p2} for every $r\in\{i+1,\dots,L-1\}$ if $F_{i+1}^{2j}\in
  K^{(L-r)}$ then $F_{i+1}^{2j}\in
  K^{(L-r-1)}\subset K^{(L-i-2)}$;
\item\label{p1} there exists $r\in\{i+1,\dots,L-1\}$ such that $F_{i+1}^{2j}\in
  K^{(L-r)}$ and $F_{i+1}^{2j}\notin
  K^{(L-r-1)}$.
\end{enumerate}
If \ref{p2} holds then $F_{i+1}^{2j}\in K$ and we obtain \ref{c4}. Otherwise, assume \ref{p1}
holds. There exists $\{F^{4j-1}_{i+2},F^{4j}_{i+2}\}\subset
K^{(L-r-1)}\subset K^{(L-i-2)}$ such that 
\begin{eqnarray*}
F_{i+1}^{2j}\in
[F_{i+2}^{4j-1},F^{4j}_{i+2}]\;\mbox{
  and }\;\Rk(F_{i+2}^{4j-1}-F^{4j}_{i+2})\le 1.
\end{eqnarray*} 
If $\Rk(F_{i+2}^{4j-1}-F^{4j}_{i+2})=0$ then $F_{i+1}^{2j}\in K^{(L-r-1)}$, which is impossible. Therefore $\Rk(F_{i+2}^{4j-1}-F^{4j}_{i+2})=1$. If
$F^{4j-1}_{i+2}-F^{4j}_{i+2}\in \Vect(E_l)$ then $[F^{4j-1}_{i+2},F^{4j}_{i+2}]\subset [F_{i+1}^{2j-1},F_{i+1}^{2j}]$. But $F_{i+1}^{2j}\in [F_{i+2}^{4j-1},F^{4j}_{i+2}]$ which is possible only if $F_{i+1}^{2j}\in K^{(L-r-1)}$, a contradiction. Hence $F^{4j-1}_{i+2}-F^{4j}_{i+2}\in \Vect(E_{3-l})$, and we obtain \ref{c2}. Similar arguments apply to $F_{i+1}^{2j-1}$ to obtain \ref{c3} and \ref{c1}. The proof is complete.
\end{proof}

In the following, we show how to construct $\widehat{u}\in V_0^h$ and give estimate of $\mathcal{E}^\Sigma_h(\widehat{u})$ in the case of the configuration $\Rect$. The same estimate can be achieved for the case $\Trian$ by taking into account of the differences in the construction of $\widehat{u}\in V_0^h$ as in {\it Case 2} of Subsection \ref{L=2}.  
\subsection{Construction in the case $\Rect$}\label{h1-subsection}
We summarize the construction of $\widehat u$ as follows.

\bigskip

\noindent{(1)} Using Theorem \ref{choicecorollary}, we start by constructing a function $v^{(1)}$ defined on the square $[0,2h^{\alpha}]^2$ with $\alpha\in ]0,1[$, such that $\nabla v^{(1)}(\cdot)\in K^\prime\subset \Sigma^{(L-1)}$ and $v^{(1)}=0$ on the boundary of the square. More presicely, let $\alpha\in (0,1)$ and $h\in ]0,h_1[$ with $h_1:=\min\{1,\delta^{1\over \alpha}\}$. Let $v^{(1)}:[0,2h^\alpha]^2\to\RR^2$ be defined by $v^{(1)}:=(v_1,v_2)$ where
\[
v_1(x_1,x_2)=\left\{
\begin{array}{ll} a_1x_1 & \mbox{ if
    }x_1\in[0,2h^\alpha s_1]   \\ a_2x_1-2h^\alpha a_2 & \mbox{ if
    }x_1\in[2h^\alpha s_1,2h^\alpha]   
\end{array}
\right.\]

\[
v_2(x_1,x_2)=\left\{
\begin{array}{ll}
 b_1x_2 & \mbox{ if }x_2\in[0,2h^\alpha s_2]   \\
  b_2x_2-2h^\alpha b_2 & \mbox{ if }x_2\in[2h^\alpha s_2,2h^\alpha]
  
\end{array}
\right.
\]
with $s_1={a_2\over a_2-a_1}$ and $s_2={b_2\over b_2-b_1}$. We have 
\[
\nabla v^{(1)}(x)\in K^\prime=\big\{W_1^\prime,W_2^\prime,W_3^\prime,W_4^\prime\big\} \;\;\mbox{ a.e. in }[0,2h^\alpha]^2.
\]
Then it remains $L-2$ laminations to reach $\Sigma\subset K$.

\bigskip

\noindent{(2)} For each $W_s^\prime\in K^\prime$ with $s\in\{1,\cdots,4\}$, there exists $\{F_{2,s}^1,F_{2,s}^2\}\in \Sigma^{(L-2)}$ such that $\Rk(F_{2,s}^1-F_{2,s}^2)\le 1$ and $W_s^\prime\in[F_{2,s}^1,F_{2,s}^2]$. We can apply Lemma~\ref{prop1} to find a function, which we call $v^{(2)}$, such that $\nabla v^{(2)}(\cdot)\in \{F_{2,s}^1,F_{2,s}^2\}$ except a small set whose measure is provided by Lemma ~\ref{prop1}; on account of Lemma~\ref{prop0}, we can decompose each $F_i^j\in K^{(L-i)}$ in orthogonal direction with respect to the direction of the previous decomposition. So, the divisions in strips are successively orthogonal and we build successive functions $v^{(3)},\dots, v^{(L)}$ with respect to these subdivisions. (According to that specific construction, the estimate of  the measure of the set $\nabla v^{(L)}\notin K$ will be easier to evaluate). 

More presicely, we let $s\in\{1,\dots,4\}$ and we assume that the number of laminations needed to reach $K$ when starting with $W^\prime_s$ is $L_s\le L$. We have 
\[
W_s^\prime=\lambda_{1,s}^1F_{2,s}^1+(1-\lambda_{1,s}^1)F_{2,s}^2\quad\mbox{ with
}\quad\lambda_{1,s}^1\in [0,1]\quad\mbox{ and }\quad F_{2,s}^1,F_{2,s}^2\in K^{(L-2)}.
\]
For every $i\in\{3,\dots,L_s\}$ and every $j\in\{1,\dots,2^i\}$
\begin{align*}
F_{i,s}^j=\lambda_{i,s}^{j}F_{i,s}^{2j}+(1-\lambda_{i,s}^j)F_{i,s}^{2j-1}\;\mbox{ with
}\quad\lambda_{i,s}^j\in [0,1]\quad\mbox{ and }\quad F_{i,s}^{2j},F_{i,s}^{2j-1}\in K^{(L-i)}.
\end{align*}

Let $M=\sup\{\vert\xi\vert:\xi\in \Sigma^{(L)}\}$. Let
$\{k_{i}\}_{i=1,\dots,L}\subset\NN^\ast$ with $k_{1}=1$, be such
that $k_{i}>\c k_{i-1}$ for all $i\in\{2,\dots,L\}$ with
\[
\c=M\left(\min_{1\le s\le 4}\min_{1\le i\le L}\min_{1\le j\le 2^i}\left\{{\lambda_{i,s}^j},{1-\lambda_{i,s}^j}\right\}\right)^{-1}.
\]
Set $\A_1:=\NN^*$ and for each $i\in\{2,\dots,L\}$ 
\[
\A_i:=\{k\in\NN:k>\c k_{i-1}\}.
\]
The bound $\c$ allows us to make the successive subdivisions increasingly thin. Using Lemma~\ref{prop0} and Lemma~\ref{prop1}, we construct functions $v_{k_2}:=v^{(2)},v_{k_2,k_3}:=v^{(3)},\dots,v_{k_2,\dots,k_L}:=v^{(L)}$ corresponding to $L-2$ laminations.

\bigskip

\noindent{(3)} We extend by periodicity of period $2h^\alpha$ the function $v_{k_2,\dots,k_L}$ in both directions $x_1,x_2$. In order to match the zero boundary conditions, we set 
\[
\widetilde{w}_{k_2,\dots,k_{L}}(\cdot):=\left(\min\left\{v^1_{k_2,\dots,k_L}(\cdot),\dist(\cdot,\partial\Omega)\right\},\min\left\{v^2_{k_2,\dots,k_L}(\cdot),\dist(\cdot,\partial\Omega)\right\}\right),
\]
and finally we define $u^\alpha_{k_2,\dots,k_{L}}$ the interpolant of $\widetilde{w}_{k_2,\dots,k_{L}}$ on $\T_h$.
                                     
\subsection{Error estimation in the case $\Rect$} Let $j\in\NN^\ast$. We denote by 
\[
I_j:=
\begin{cases}
\{0,1,2,3,\dots,{j-1\over 2}\}&\mbox{ if }j \mbox{ is odd}\\
\{0,1,2,3,\dots,{j\over 2}\}&\mbox{ if }j \mbox{ is even}
\end{cases}
\]
thus
\[
\prod_{l\in I_j}k_{j-2l}=\begin{cases}
k_jk_{j-2}\cdots k_3k_1&\mbox{ if }j \mbox{ is odd}\\
k_jk_{j-2}\cdots k_4k_2&\mbox{ if }j \mbox{ is even}.
\end{cases}
\]

\medskip

After $i-1$ successive modifications of the function $v^{(1)}$, we obtain a function $v^{(i)}$, where $i\in\{2,\dots,L\}$. We estimate the set of ``bad gradients", i.e., where the function $\nabla v^{(i)}$ possibly does not belong to $K$. 

Let a cell be such that its length is of order 
\[
\prod_{l\in I_{i-2\over 2}}{h^{\alpha}\over k_{i-2-2l}}
\] 
and its width is of order 
\[
\prod_{l\in I_{i-1\over 2}}{h^{\alpha}\over k_{i-1-2l}}.
\]
We assume that the $(i-1)$-th subdivision is made in the direction $x_1$. By Lemma~\ref{prop0} together with Lemma~\ref{prop1}, we construct a function $v^{(i)}$ and the $i$-th subdivision is made in the direction $x_2$. Thus, the cell is divided in $k_i$ strips of width of order 
\[
\prod_{l\in  I_{i\over 2}}{h^{\alpha}\over k_{i-2l}}.
\] 
\subsubsection*{Error estimation due to the preservation of continuity} By Lemma~\ref{prop1} the estimation is bounded by, for some $C>0$
\[
C\left(\left({h^\alpha\over \displaystyle\prod_{l\in I_{i-2\over 2}}k_{i-2-2l}}\right)^2{1\over k_i}\right)\times N_s
\]
where $N_s$ is the number of these cells in each rectangle $Q_s$ which is bounded by
\[
C k_{i-1}k_{i-2}\cdots k_2k_1=\prod_{l=1}^{i-1}k_l.
\]
We deduce that for some $C>0$ 
\[
\left\vert \left\{x\in Q_s:\nabla v_{k_2,\dots,k_i}(x)\notin \Sigma\right\} \right\vert\le Ch^{2\alpha}{\displaystyle \prod_{l=0}^{1+{i\over 2}}k_{i-2l-1}\over \displaystyle \prod_{l=0}^{i\over 2}k_{i-2l}}=Ch^{2\alpha}{k_{i-1}k_{i-3}k_{i-5}\cdots\over k_ik_{i-2}k_{i-4}\cdots}.
\]
After extension by periodicity, since the number of square in $\Omega$ is bounded by ${Ch^{-2\alpha}}$, we obtain for some $C>0$
\begin{equation}\label{estim1}
\left\vert \left\{x\in \Omega:\nabla v_{k_2,\dots,k_i}(x)\notin \Sigma\right\} \right\vert\le C\left(\displaystyle \sum_{i=2}^{L}{\displaystyle \prod_{l=0}^{1+{i\over 2}}k_{i-2l-1}\over \displaystyle \prod_{l=0}^{i\over 2}k_{i-2l}}\right).
\end{equation}
\subsubsection*{Error estimation due to the interpolation} After interpolation, we have a bound due to the interpolation as
\[
C h{h^{\alpha}\over \displaystyle \prod_{l=0}^{1+{i\over 2}}k_{i-2l-1}}k_i\times N_s,
\]
where $N_s$ is the number of these cells in each rectangle $Q_s$. We deduce that the bound is
\[
Ch^{1+\alpha}{k_{i}k_{i-2}\cdots}=Ch^{1+\alpha}\prod_{l=0}^{i\over 2}k_{i-2l}.
\]
We have to take into account of the error due to the interpolation for $v^{(1)}$, so we set $k_0=k_1=1$. After extension by periodicity, we obtain a bound 
\begin{equation}\label{estim2}
Ch^{1-\alpha}\sum_{i=1}^L {\prod_{l=0}^{i\over 2}k_{i-2l}}
\end{equation}
since the number of square is bounded by ${Ch^{-2\alpha}}$.
\subsubsection*{Error estimation at the boundary $\partial\Omega$}  The bound is 
\begin{equation}\label{estim3}
Ch^\alpha,
\end{equation}
see Subsection~\ref{L=2}~\ref{error-boundary-estimate} for details.

Since $\nabla \widetilde{w}_{k_2,\dots,k_L}$ is bounded independently of $h$, it follows $u^\alpha_{k_2,\dots,k_L}\in V_{0,C}^h$ for some $C>0$.  

\subsection{End of the proof of Theorem ~\ref{main result}} 
Collecting the estimations (\ref{estim1}), (\ref{estim2}) and (\ref{estim3}), we obtain for some $C>0$
\begin{align}\label{estim_totale1}\Err^\Sigma\left(u^\alpha_{k_2,\dots,k_L}\right)\le C\left(h^\alpha+h^{1-\alpha}\sum_{i=1}^L {\displaystyle \prod_{l\in I_{i\over 2}}k_{i-2l}}+\sum_{i=2}^{L}{\displaystyle \prod_{l\in I_{i-1\over 2}}k_{i-2l-1}\over \displaystyle\prod_{l\in I_{i\over 2}}k_{i-2l}}\right).
\end{align}
\begin{lemma}\label{induction1} For every $i\in\{1,\dots,L-2\}$ there exists $C_i>0$ such that for every $(k_2,\dots,k_{L-i+1})\in\prod_{j=2}^{L-i+1}\A_j$, there exists $h_i>0$ so that for every $h\in]0,h_i[$ there exists $(\widehat{k}_{L-i+2},\dots,\widehat{k}_{L})\in\prod_{j=L-i+2}^{L}\A_j$ satisfying 
\begin{align}
&\Err^\Sigma\left(u^\alpha_{k_2,\dots,{k}_{L-i+1},\widehat{k}_{L-i+2},\dots,\widehat{k}_L}\right)\notag\\
 \le& \displaystyle C_i\left(h^\alpha+h^{1-\alpha}\sum_{j=1}^{L-i} {\prod_{l=0}^{j\over 2}k_{j-2l}} + \displaystyle\sum_{j=2}^{L-i}{\displaystyle\prod_{l=0}^{{j-1\over 2}}k_{j-2l-1}\over\displaystyle\prod_{l=0}^{j\over 2}k_{j-2l}}\displaystyle+\left(h^{1-\alpha}\prod_{l=0}^{{L-i+1\over 2}}k_{L-i+1-2l}\right)^{1\over i}\right.\notag\\
 &\hspace{8.cm}\left.+{\displaystyle\prod_{l=0}^{{L-i\over 2}}k_{L-i-2l}\over\displaystyle\prod_{l=0}^{L-i+1\over 2}k_{L-i+1-2l}}\right).\label{inf_estimat_totale1}
\end{align}
\end{lemma}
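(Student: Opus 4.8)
I would prove Lemma~\ref{induction1} by induction on $i\in\{1,\dots,L-2\}$, the point being to optimise, one parameter at a time, over the innermost still-free index in the bound already established. \emph{Base case $i=1$.} Here the index range $\{L-i+2,\dots,L\}$ is empty, so there is nothing to choose, and \eqref{inf_estimat_totale1} is just a rewriting of \eqref{estim_totale1}: split off the $j=L$ summand from each of its two sums. Its contribution to the first sum is $h^{1-\alpha}\prod_{l=0}^{L/2}k_{L-2l}$, which equals $\bigl(h^{1-\alpha}\prod_{l=0}^{(L-i+1)/2}k_{L-i+1-2l}\bigr)^{1/i}$ for $i=1$, and its contribution to the second sum is $\prod_{l=0}^{(L-1)/2}k_{L-1-2l}\big/\prod_{l=0}^{L/2}k_{L-2l}$, which is the last summand of \eqref{inf_estimat_totale1} for $i=1$. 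So the case $i=1$ holds with $C_1=C$ and $h_1$ the (tuple-independent) threshold under which \eqref{estim_totale1} was obtained.

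\emph{Inductive step $i\to i+1$.} Fix $(k_2,\dots,k_{L-i})\in\prod_{j=2}^{L-i}\A_j$. Scanning \eqref{inf_estimat_totale1}, the index $k_{L-i+1}$ appears in exactly two terms,
\[
T_1:=\Bigl(h^{1-\alpha}\!\!\prod_{l=0}^{(L-i+1)/2}\!\!k_{L-i+1-2l}\Bigr)^{1/i},\qquad T_2:=\frac{\prod_{l=0}^{(L-i)/2}k_{L-i-2l}}{\prod_{l=0}^{(L-i+1)/2}k_{L-i+1-2l}},
\]
$T_1$ increasing and $T_2$ decreasing in $k_{L-i+1}$. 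Writing $\Pi:=\prod_{l=0}^{(L-i-1)/2}k_{L-i-1-2l}$ and $Q:=\prod_{l=0}^{(L-i)/2}k_{L-i-2l}$ (so that $\prod_{l=0}^{(L-i+1)/2}k_{L-i+1-2l}=\Pi\,k_{L-i+1}$, using $k_0=k_1=1$), I would minimise $x\mapsto(ax)^{1/i}+b/x$ over $x\ge1$ with $a=h^{1-\alpha}\Pi$, $b=Q/\Pi$: the minimiser is $x^\ast=(ib)^{i/(i+1)}a^{-1/(i+1)}$ with value $\bigl(i^{1/(i+1)}+i^{-i/(i+1)}\bigr)(ab)^{1/(i+1)}\le C(i)\bigl(h^{1-\alpha}Q\bigr)^{1/(i+1)}$ (note $ab=h^{1-\alpha}Q$). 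Since $x^\ast\to+\infty$ as $h\to0^+$, for $h$ below a threshold depending only on $k_2,\dots,k_{L-i}$ the integer $\widehat k_{L-i+1}:=\lceil x^\ast\rceil+1$ exceeds $\c k_{L-i}$, hence lies in $\A_{L-i+1}$, and the rounding multiplies $T_1+T_2$ by at most a bounded factor; thus $T_1+T_2\le C(i)\bigl(h^{1-\alpha}\prod_{l=0}^{(L-i)/2}k_{L-i-2l}\bigr)^{1/(i+1)}$, the new ``$1/(i+1)$--term'' of \eqref{inf_estimat_totale1} at level $i+1$.

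The rest is bookkeeping on the two sums of \eqref{inf_estimat_totale1}: their parts over $j\le L-i-1$ are exactly the sums of \eqref{inf_estimat_totale1} at level $i+1$; the $j=L-i$ summand of the first sum, $h^{1-\alpha}\prod_{l=0}^{(L-i)/2}k_{L-i-2l}$, is bounded by its own $(i+1)$-st root once $h$ is small enough that this quantity is $<1$ (a threshold depending only on $k_2,\dots,k_{L-i}$); and the $j=L-i$ summand of the second sum is precisely the last summand of \eqref{inf_estimat_totale1} at level $i+1$. Applying the inductive hypothesis at level $i$ to the tuple $(k_2,\dots,k_{L-i},\widehat k_{L-i+1})$ to supply $(\widehat k_{L-i+2},\dots,\widehat k_L)$, and collecting, one obtains \eqref{inf_estimat_totale1} at level $i+1$ with $C_{i+1}:=C(i)\,C_i$, uniform in the tuple.

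\emph{The main obstacle} is the choice of $h_{i+1}$. Two of its ingredients — that $\widehat k_{L-i+1}\in\A_{L-i+1}$ and that $h^{1-\alpha}\prod_{l=0}^{(L-i)/2}k_{L-i-2l}<1$ — impose only a threshold depending on the fixed data. The third is the applicability of the inductive hypothesis to $(k_2,\dots,k_{L-i},\widehat k_{L-i+1})$, whose threshold $h_i$ depends on $\widehat k_{L-i+1}$, which in turn depends on $h$: this self-reference must be untangled. It works because $\widehat k_{L-i+1}$ grows only like the fractional negative power $h^{-(1-\alpha)/(i+1)}$ of $h$, while $h_i$, read off from the recursion, is bounded below by a fixed negative power of its arguments; for the range of $\alpha$ that will ultimately be used (the final optimisation takes $\alpha\sim 1/(L+1)$, so $\alpha$ small) the product of the two exponents remains $<1$, whence $h<h_i(k_2,\dots,k_{L-i},\widehat k_{L-i+1})$ for all sufficiently small $h$, and one takes $h_{i+1}$ to be the least of the three thresholds.
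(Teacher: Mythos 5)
Your proof is essentially the paper's own: induction on $i$, base case $i=1$ as a rewriting of \eqref{estim_totale1}, and an inductive step that isolates the two terms of \eqref{inf_estimat_totale1} depending on $k_{L-i+1}$, minimizes the single-variable function $\sigma_i(x)=(h^{1-\alpha}\gamma_i x)^{1/i}+\theta_i/(\gamma_i x)$ over integers (here $\theta_i=\prod_{l=0}^{(L-i)/2}k_{L-i-2l}$, $\gamma_i=\prod_{l=0}^{(L-i-1)/2}k_{L-i-1-2l}$, so $\gamma_i k_{L-i+1}=\prod_{l=0}^{(L-i+1)/2}k_{L-i+1-2l}$), bounds the rounding error, absorbs the $j=L-i$ term of the first sum using $h^{1-\alpha}\theta_i<1$, and shrinks the threshold to keep $\widehat k_{L-i+1}\in\A_{L-i+1}$. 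This matches the paper's formulas for $x_i$, for $c_i=i^{1/(i+1)}+i^{-i/(i+1)}+1$, and for the auxiliary thresholds $h',h''$.

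The self-reference you flag in the last paragraph is a genuine feature of the argument, and one the paper glosses over: it simply writes $h_{i+1}:=\min\{\theta_i^{-1/(1-\alpha)},h',h'',h_i\}$ as if $h_i$ were a fixed number, even though $h_i$ is the inductive threshold for the tuple $(k_2,\dots,k_{L-i},\widehat k_{L-i+1})$ and $\widehat k_{L-i+1}=\lceil x_i\rceil+1$ depends on $h$. Your proposed untangling — compare the growth rate of $\widehat k_{L-i+1}(h)$ with the decay rate of $h_i$ in its last argument — is the right repair. One correction, though: the restriction on $\alpha$ you invoke is neither needed nor in the right direction. Since $\widehat k_{L-i+1}(h)\sim h^{-(1-\alpha)/(i+1)}$, taking $\alpha$ smaller makes $\widehat k_{L-i+1}$ grow \emph{faster}, not slower. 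What saves the argument for every $\alpha\in(0,1)$ is that all the thresholds entering $h_i$ decay in $k_{L-i+1}$ with exponents that carry a factor $1/(1-\alpha)$: e.g. $\theta_{i-1}^{-1/(1-\alpha)}\propto k_{L-i+1}^{-1/(1-\alpha)}$ and $h''_{i-1}\propto k_{L-i+1}^{-1/(1-\alpha)}$ (the $i/(1-\alpha)$ and $(i-1)/(1-\alpha)$ powers of $k_{L-i+1}$ cancel to $-1/(1-\alpha)$). Composing with $\widehat k_{L-i+1}\sim h^{-(1-\alpha)/(i+1)}$, the two $(1-\alpha)$ factors cancel and one is left with a power $h^{1/(i+1)}$ (or at worst some $h^{q}$ with $q<1$ from deeper levels of the recursion), which dominates $h$ as $h\to 0^+$. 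So the compatibility of exponents holds unconditionally, and the inductive step can be closed for every admissible $\alpha$; neither your proposal nor the paper spells this step out, and it should be, but it is not an obstruction.
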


Without loss of generality we assume that $L\ge 3$. From Lemma~\ref{induction1}, we deduce that there exists $C_{L-2}>0$ such that for every $(k_2,k_3)\in\A_2\times\A_3$ there exists $h_{L-2}>0$ satisfying
\begin{eqnarray*}
\Err^\Sigma(u^\alpha_{k_2,k_3,\widehat{k}_4,\dots,\widehat{k}_L}) &\le &  C_{L-2}\left(h^\alpha+{1\over k_2}+{k_2\over k_3}+h^{1-\alpha}(1+k_2)+({k_3}h^{1-\alpha})^{{1\over L-2}}\right)
\end{eqnarray*} 
for all $h\in]0,h_{L-2}[$. 

The function $]0,+\infty[\ni x\mapsto \sigma_{L-2}(x):={k_2\over x}+({x}h^{1-\alpha})^{{1\over L-2}}$ reaches its minimum at $x_3=(L-2)^{L-2\over L-1}{k_2}^{L-2\over L-1}h^{-{1-\alpha\over L-1}}$. Set $\widehat{k}_3:=\ceil [\big]{x_3}+1$. We have
\begin{align*}
&\inf_{k_3\in\NN}\sigma_{L-2}\left(k_3\right)\\
\le& \sigma_{L-2}\left(\widehat{k}_3\right)\\
\le& (L-2)^{-\frac{L-2}{L-1}}\left(k_2h^{1-\alpha}\right)^{\frac{1}{L-1}}+2^{L-3}\left((L-2)^{\frac{1}{L-1}}\left(k_2h^{1-\alpha}\right)^{\frac{1}{L-1}}+h^{\frac{1-\alpha}{L-2}}\right)\\
\le&\left((L-2)^{-\frac{L-2}{L-1}}+2^{L-3}\left(1+(L-2)^{\frac{1}{L-1}}\right)\right)\left(k_2h^{1-\alpha}\right)^{\frac{1}{L-1}}
\end{align*}
since $k_2\ge 1\ge h^{\frac{1}{L-2}}$. In order to have $\widehat{k}_3\in\A_3$ we take $h<h^\prime$ with
\begin{align*}
h^\prime:=\left((L-2)k_2\right)^{\frac{L-2}{1-\alpha}} \left(\c k_2-1\right)^{-{L-1\over 1-\alpha}}
\end{align*}
Now, if $h<h^\second:=\left(\frac{1}{k_2}\right)^{\frac{1}{1-\alpha}}$ we have
\begin{align}\label{eq-cl-2}
h^{1-\alpha}(1+k_2)\le 2\left(k_2h^{1-\alpha}\right)^{\frac{1}{L-1}}
\end{align}
since $k_2+1\le 2k_2$. 

We set
\begin{align*}
C_{L-1}:=&\max\left\{2,\left((L-2)^{-\frac{L-2}{L-1}}+2^{L-3}\left(1+(L-2)^{\frac{1}{L-1}}\right)\right),C_{L-2}\right\}\\
h_{L-1}:=&\min\left\{h_{L-2}, h^\prime,h^\second\right\}.
\end{align*}
Taking \eqref{eq-cl-2} into account we obtain
\begin{eqnarray*}
\Err^\Sigma(u^\alpha_{k_2,\widehat{k}_3,\widehat{k}_4,\dots,\widehat{k}_L})
\le C_{L-1}\left( h^\alpha+{1\over k_2}+\left(k_2h^{1-\alpha}\right)^{1\over L-1}\right).
\end{eqnarray*}
for all $h\in ]0,h_{L-1}[$.

The function $]0,+\infty[\ni x\mapsto \sigma_{L-1}(x):={1\over x}+(xh^{1-\alpha})^{1\over L-1}$ is minimum for $x_2=h^{-{1-\alpha\over L}}$. Set $\widehat{k}_2:=\ceil [\big]{x_2}+1$. Thus, similarly as above, there exist $C_L>0$ and $h_L>0$, such that we have 
\[
\Err^\Sigma(u^\alpha_{\widehat{k}_2,\widehat{k}_3,\widehat{k}_4,\dots,\widehat{k}_L})
\le C_L \left(h^\alpha+h^{1-\alpha\over L}\right)
\]
for all $h\in ]0,h_L[$.
The best constant $\alpha$ is $\widehat{\alpha}={1-\widehat{\alpha}\over L}$, i.e., $\widehat{\alpha}={1\over 1+L}$. Set $\widehat{u}:=u^{\widehat\alpha}_{\widehat{k}_2,\widehat{k}_3,\widehat{k}_4,\dots,\widehat{k}_L}$, we obtain
\[
\Errm^K\le\Errm^\Sigma\le\Err^\Sigma(\widehat u)\le C_L h^{1\over 1+L}.
\]
Finally we can adjust $C_L$ in order to have $\widehat u\in V_{0,C_L}^h$ at the same time. The proof is complete.
\hfill$\blacksquare$
\subsubsection*{Proof of Lemma~\ref{induction1}} The proof follows by induction. Let $i=1$ then (\ref{inf_estimat_totale1}) corresponds to (\ref{estim_totale1}). Take $C_1=C$, $h_1$ defined in Subsection~\ref{h1-subsection} (1), and any $\widehat{k}_L\in \A_{L}$. 

Now, fix $i\in\{1,\dots,L-2\}$ and choose the corresponding $C_i>0$ such that for every $(k_2,\dots,k_{L-i+1})\in\prod_{j=2}^{L-i+1}\A_j$, there exists $h_i>0$, so that for every $h\in ]0,h_i[$ we can find $(\widehat{k}_{L-i+2},\dots,\widehat{k}_{L})\in\prod_{j=L-i+2}^{L}\A_j$ satisfying \eqref{inf_estimat_totale1}.

Fix $(k_2,\dots,k_{L-i})\in \prod\limits_{2\le j\le L-i+1}\A_j$. 
Set 
\begin{align*}
\theta_i:=\prod_{l=0}^{{L-i\over 2}}k_{L-i-2l}\quad\mbox{ and }\quad\gamma_i:=\prod_{l=0}^{L-i-1\over 2}k_{L-i-1-2l}.
\end{align*}
Fix $h\in ]0,1[$. Minimize over $\A_{L-i+1}$ the left hand side of the inequality \eqref{inf_estimat_totale1}. For this we  consider the function $x\mapsto\sigma_{i}(x)=\left(h^{1-\alpha}x\gamma_i\right)^{1\over i}+{\theta_i\over \displaystyle x\gamma_i}$ which reaches its minimum at 
\[
x_i=i^{i\over i+1}h^{-{1-\alpha\over i+1}}{\theta_i^{i\over i+1}\over\gamma_i}.
\] 
We set $\widehat{k}_{L-i+1}:=\ceil [\big]{x_i}+1$. We can see that $\sigma\left(\widehat{k}_{L-i+1}\right)\le \sigma(x_i)+\left(h^{1-\alpha}\gamma_i\right)^{1\over i}$. If $h<h^\prime:=\left(\theta_i^i\over\gamma_i^{i+1}\right)^{1-\alpha}$ then $\left(h^{{1-\alpha}}\theta_i\right)^{1\over i+1}>\left(h^{{1-\alpha}}\gamma_i\right)^{1\over i}$. So, for $c_i:=i^{1\over i+1}+i^{-{i\over i+1}}+1$ we have
\begin{align}\label{sigmai-integer}
\inf_{{k}_{L-i+1}\in\NN}\sigma({k}_{L-i+1})\le\sigma\left(\widehat{k}_{L-i+1}\right)\le c_i \left(h^{{1-\alpha}}\theta_i\right)^{1\over i+1}.
\end{align}
The condition $\widehat{k}_{L-i+1}\in\A_{L-i+1}$ is equivalent to $h<h^\second$ with
\[
h^\second:=\left(\gamma_i(\c k_{L-i}-1)\right)^{-\frac{i+1}{1-\alpha}}\left(i\theta_i\right)^{\frac{i}{1-\alpha}}.
\]
Let $C^\prime:=\max\left\{C_i, c_i\right\}$. Then we obtain that for every $h\in ]0,\min\{h^\prime,h^\second,h_{i}\}[$
\begin{align*}
&\Err^\Sigma\left(u^\alpha_{k_2,\dots,\widehat{k}_{L-i+1},\dots,\widehat{k}_L} \right)\\
\le&   \displaystyle C^\prime\left(h^\alpha+h^{1-\alpha}\sum_{j=1}^{L-i} {\displaystyle\prod_{l=0}^{j\over 2}k_{j-2l}}+\sum_{j=2}^{L-i}{\displaystyle\prod_{l=0}^{{j-1\over 2}}k_{j-2l-1}\over\displaystyle\prod_{l=0}^{j\over 2}k_{j-2l}}+\left(h^{1-\alpha}\displaystyle\prod_{l=0}^{{L-i\over 2}}k_{L-i-2l}\right)^{1\over i+1}\right).
\end{align*}
Note that if $h<\theta_i^{-{1\over 1-\alpha}}$ then
\[
\left(h^{1-\alpha}\prod_{l=0}^{{L-i\over 2}}k_{L-i-2l}\right)^{1\over i+1}>h^{1-\alpha}\prod_{l=0}^{{L-i\over 2}}k_{L-i-2l}.
\]
Set $h_{i+1}:=\min\left\{\theta_i^{-{1\over 1-\alpha}},h^\prime,h^\second,h_i\right\}$ and $C_{i+1}:=\max\{C^\prime,2\}$, then for every $h\in ]0,h_{i+1}[$
\begin{align}
&\Err^\Sigma\left(u^\alpha_{k_2,\dots,\widehat{k}_{L-i+1},\dots,\widehat{k}_L} \right)\notag\\ 
\le&   C_{i+1}\left(h^\alpha+h^{1-\alpha}\sum_{j=1}^{L-i-1} {\displaystyle\prod_{l=0}^{j\over 2}k_{j-2l}}+\sum_{j=2}^{L-i-1}{\displaystyle\prod_{l=0}^{{j-1\over 2}}k_{j-2l-1}\over\displaystyle\prod_{l=0}^{j\over 2}k_{j-2l}}\right.\notag\\
&\hspace{2cm}+ \left.\left(h^{1-\alpha}\displaystyle\prod_{l=0}^{{L-i\over 2}}k_{L-i-2l}\right)^{1\over i+1}+{\displaystyle\prod_{l=0}^{{L-i-1\over 2}}k_{L-i-2l-1}\over\displaystyle\prod_{l=0}^{L-i\over 2}k_{L-i-2l}}\right).\label{inf_estimat_totale2}
\end{align}
The proof is complete.
\hfill$\blacksquare$

\section*{Acknowledgement} The author thanks Michel Chipot for the suggestion of this problem. A part of this work was developed during a temporary position at the Institute of Mathematics, University of Z\"urich.

\end{document}